\title{Approximation of Smoothness Classes
by Deep \alert{Rectifier} Networks\thanks{Submitted to the editors 17th of August, 2020.
\funding{The authors acknowledge AIRBUS Group for the financial support with the project AtRandom.}}}
\author{Mazen Ali\thanks{Centrale Nantes, LMJL UMR CNRS 6629, France
(\email{\{mazen.ali, anthony.nouy\}@ec-nantes.fr}).}
\and Anthony Nouy\footnotemark[2]}
\newcommand{\alert}[1]{#1}
\newcommand{\alertt}[1]{#1}
\newcommand{\bs}{\boldsymbol}
\newcommand{\mc}{\mathcal}
\newcommand{\mb}{\mathbb}
\renewcommand*\d{\mathop{}\!\mathrm{d}}
\newcommand{\R}{\mathbb{R}}
\newcommand{\N}{\mathbb{N}}
\newcommand{\Z}{\mathbb{Z}}
\newcommand{\id}{\mathbb{I}}
\newcommand{\repu}{\ensuremath{\mathsf{RePU}}}
\newcommand{\relu}{\ensuremath{\mathsf{ReLU}}}
\newcommand{\rlz}{\ensuremath{\mathcal{R}}}
\newcommandx{\norm}[2][2=]{\ensuremath{\left\| #1 \right\|_{#2}}}
\newcommandx{\snorm}[2][2=]{\ensuremath{\left| #1 \right|_{#2}}}
\DeclareMathOperator*{\supp}{supp}
\DeclareMathOperator*{\depth}{depth}
\DeclareMathOperator*{\dist}{dist}
\newcommand{\aff}{\ensuremath{\mathsf{Aff}}}
\newcommand{\nl}{\ensuremath{\mathsf{NL}}}
\newcommand{\tool}{\ensuremath{\Sigma_n}}
\begin{document}

\maketitle

\begin{abstract}
    We consider approximation rates
    of sparsely connected deep rectified linear
    unit (ReLU) and rectified power unit (RePU) neural networks
    for functions in Besov spaces $B^\alpha_{q}(L^p)$
    in arbitrary dimension $d$,
    on \alert{general} domains.
    We show that \alert{deep rectifier} networks with a fixed
    activation function attain \alert{optimal or near to optimal}
    approximation rates for functions
    in the Besov space $B^\alpha_{\tau}(L^\tau)$ on the critical embedding
    line $1/\tau=\alpha/d+1/p$ for \emph{arbitrary} smoothness order
    $\alpha>0$.
    Using interpolation theory,
    this implies that the entire range of smoothness classes
    at or above the critical line is (near to) optimally approximated
    by deep ReLU/RePU networks.
\end{abstract}

\begin{keywords}
    ReLU Neural Networks, Approximation Spaces,
    Besov Spaces,
    Direct Embeddings,
    Direct (Jackson) Inequalities
\end{keywords}

\begin{AMS}
    41A65, 41A15 (primary); 68T05, 65D99 (secondary)
\end{AMS}

\section{Introduction}\label{sec:intro}
Artificial neural networks (NNs) have become a popular
tool in various fields of computational and data science.
Due to \alert{their} popularity and good performance,
NNs motivated a lot of research in mathematics --
especially in recent years -- in an attempt to
explain the properties of NNs
responsible for their success.

Although many aspects of NNs still lack a satisfactory
mathematical explanation,
the \emph{expressivity} or approximation theoretic properties
of NNs are by now quite well understood.
By expressivity we mean the theoretical capacity of NNs
to approximate functions from different classes.
We do not intend to give a literature overview
on this topic and instead refer to the recent survey in
\cite{review}.

\subsection*{Contribution}
In this work, we contribute to the existing body of knowledge
on the expressivity of NNs by showing that
the very popular \alert{rectifier
NNs} can approximate a \alert{wide range
of smoothness classes in the Besov scale with (near to)} optimal complexity.
\alert{In the context of this work,
``optimality'' refers to the notion of
\emph{continuous nonlinear widths}\footnote{\alert{Closely related to
Aleksandrov widths, see \cite{Dung1996}.}} introduced in
\cite{devore1989optimal}. For the approximation of functions in the Besov space
$B^\alpha_q(L^p(\R^d))$,
an approximation tool with a continuous parameter selection for the approximand can achieve worst case approximation rates of at most $\alpha/d$.}
To make the distinction to existing results
clear, we briefly review what is
known by now about the approximation of
some more standard smoothness classes
closely related to our work.
In all instances ``complexity'' is measured by the number of
connections, i.e., non-zero weights.

In \cite{OSZ19_839}, it was shown that analytic functions
on a compact product domain in any dimension
can be approximated in the Sobolev norm
$W^{k,\infty}$
by ReLU and RePU networks with close to exponential convergence.
In \cite{PETERSEN2018296}, it was shown that ReLU networks can approximate
any Hölder continuous function with optimal
complexity.
In \cite{gribonval:hal-02117139}, it was shown that functions in the Besov space
$B^\alpha_p(L^p(\Omega))$ on bounded Lipschitz
domains $\Omega\subset\R^d$ in any dimension
can be approximated in the $L^p$-norm
by RePU networks with activation function of degree $r\gtrsim\alpha$
with optimal complexity.
The spaces $B^\alpha_p(L^p(\Omega))$ correspond to the vertical
line in \Cref{fig:DeVore}, i.e., for $p\geq 1$ these are either
the same or slightly larger than
the Sobolev spaces $W^{k,p}(\Omega)$.

\begin{figure}[h]
        \centering
        \includegraphics[scale=.5]{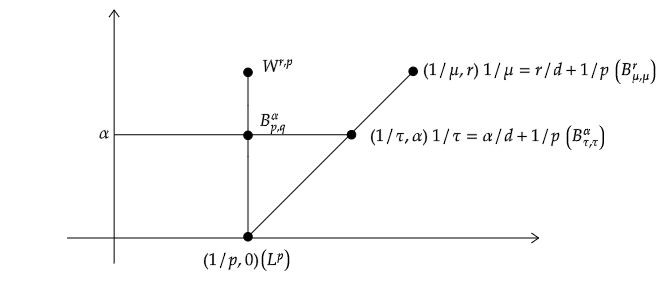}
        \caption{DeVore diagram of smoothness spaces \cite{devore_1998}.
        The Sobolev embedding line is the diagonal
        with the points $(1/\tau,\alpha)$ and $(1/\mu, r)$,
        \alert{i.e., for a fixed $p$
        and variable $1/\tau$,
        the diagonal is the line $\alpha
        =d(1/\tau-1/p)$
        with slope $d$ and offset $-d/p$.
        All points above the diagonal line correspond to Besov spaces
        compactly embedded in $L^p$,
        points on the line may or may not be continuously embedded
        in $L^p$, and points below the line are never embedded in $L^p$.}}
        \label{fig:DeVore}
\end{figure}

In \cite{SchwabFEM}, it was shown that functions in the
Besov space $B^\alpha_\tau(L^\tau(I))$, for
$\alpha>1/\tau-1/p$ on bounded intervals $I\subset\R$,
can be approximated in the \alert{(fractional) Sobolev $W^{s,p}(I)$-norm}
\alert{with deep ReLU networks}
with near to optimal complexity.
\alert{In \cite{suzuki2018adaptivity}, the author shows that
functions in $B^\alpha_\tau(L^\tau([0,1]^d))$
for $\alpha/d>1/\tau-1/p$ and in Besov spaces of dominating
mixed smoothness can be approximated in $L^p$ with
deep ReLU networks with near to
optimal complexity.}
The space \alert{$B^\alpha_\tau(L^\tau(\Omega))$
for $\alpha/d>1/\tau-1/p$
and Lipschitz domains $\Omega$}
is above the \emph{critical embedding line} of functions
that barely have enough regularity to be members of $L^p$,
see the diagonal in \Cref{fig:DeVore}. Spaces above
this critical line are embedded in $L^p$, spaces on
this line may or may not be embedded in $L^p$,
and spaces below this line are never embedded in $L^p$.

It was also shown in \cite{SchwabFEM}
that \alertt{piecewise} Gevrey functions
can be approximated with close to exponential convergence. Similar results for classical smoothness spaces of univariate functions are contained  in \cite{Daubechies2019arXiv}.

In this work, we show that functions in
\alert{isotropic Besov spaces}
$B^\alpha_{\alert{q}}(L^\tau(\Omega))$, for
\alert{$\alpha/d\geq 1/\tau-1/p$, $q\leq\tau$ and $\Omega\subset\R^d$ an
$(\varepsilon,\delta)$- or a Lipschitz domain (see
\Cref{def:Lipschitz,def:epsdelta})}
in any dimension $d\in\N$,
can be
approximated by RePU networks
with activation function of degree \alert{$r\geq2$}
with optimal complexity for any $\alpha>0$.
We show the same for ReLU networks with near
to\footnote{\alert{For} any approximation rate arbitrarily close
to optimal.} optimal complexity.
This completes the picture for \alert{rectifier networks}
expressivity rates for classical \alert{isotropic} smoothness spaces
in the sense that, with regard to $L^p$ approximation,
functions from any Besov space
\alertt{on or above the embedding line (see \Cref{fig:DeVore}),
with $q\leq\tau$,}
can be approximated by ReLU/RePU networks with
(near to) optimal complexity, \alert{universal in the smoothness order
$\alpha$.}

\subsection*{Outline}
We begin in
\Cref{sec:nn} and \Cref{sec:app}
by reviewing the theoretical framework
of our work.
We then state the main result in
\Cref{sec:main} \alert{that includes a summary of the results
on isotropic Besov spaces}.
To keep the presentation self-contained,
we review previous results
\alert{-- that we require for our work --} on ReLU approximation in
\Cref{sec:prelim} and \alert{Besov} smoothness classes in \Cref{sec:wavs}.
Finally, in \Cref{sec:optimal} we derive the main result of this work, stated again in
\Cref{thm:embedd}.
The reader familiar with results on ReLU/RePU approximation
and wavelet characterizations of Besov spaces can skip directly
to \Cref{sec:optimal}.

\subsection*{\alert{Notation and Terminology}} For quantities
$A,B\in\R$, we will use the notation
$A\lesssim B$ if there exists a constant $C$ that does not depend
on $A$ or $B$ such that $A\leq CB$. Similarly for $\gtrsim$ and $\sim$ if
both inequalities hold. \alert{We use $\N$ for natural numbers and
$\N_0:=\N\cup\{0\}$.}
We use $\supp(f)$ to denote the support of a function
$f\in\R^d\rightarrow\R$
\begin{align*}
    \supp(f):=\overline{\{x\in\R^d:\;f(x)\neq 0\}},
\end{align*}
and $|\supp(f)|$ to denote the Lebesgue measure of this set.
We use $\#$ to denote the standard counting measure.

\alert{We denote by $L^p(\Omega)$ the Lebesgue space of
real-valued $p$-integrable functions for $0<p\leq\infty$ on
open subsets $\Omega\subset\R^d$ and by $H_p(\Omega)$
the real Hardy space (see \cite{Fefferman1972}).
\alertt{Recall that the real Hardy spaces are isomorphic 
to $L^p$ for $p>1$.}
In this work, we will be referring to one of the following three types
of domains $\Omega$ (see \cite{Jones1981,RobertAdams2003,Stein70,devore1993besov}).}
\alert{
\begin{definition}[Special Lipschitz]\label{def:speclip}
    We call an open set $\Omega\subset\R^d$ a \emph{special Lipschitz domain} if
    there exists a Lipschitz function $\phi:\R^{d-1}\rightarrow\R$ with
    $|\phi(x_1)-\phi(x_2)|\leq M\|x_1-x_2\|_2$ for some constant $M>0$ such that
    \begin{align*}
        \Omega=\left\{(x,y):\;
        x:\R^{d-1},\;y\in\R\text{ and }y>\phi(x)\right\}.
    \end{align*}
\end{definition}
}
\alert{
\begin{definition}[Strong Local Lipschitz Condition]\label{def:Lipschitz}
    An open set $\Omega\subset\R^d$
    is said to satisfy the \emph{strong local Lipschitz condition}
    -- also known as a \emph{minimally smooth domain} or simply \emph{Lipschitz} --
    if there exists $\varepsilon>0$,
    \alertt{$M>0$, a locally finite open cover $\{U_i:\;i\in\N\}$
    of $\partial\Omega$,
    and, for each $i$ a real-valued function
    $f_i$ of $d-1$ variables, such that
    \begin{enumerate}[label=(\roman*)]
    	\item for some finite $R$, every collection of $R+1$
    	of the sets $U_i$ has empty intersection;
    	\item for every pair of points $x,y\in\Omega$ such that
    	$\dist(x,\partial\Omega),
    	\dist(y,\partial\Omega)>\varepsilon$ and
    	$\|x-y\|_2<\varepsilon$, there exists
    	$i$ such that
    	\begin{align*}
    		x,y\in V_i:=\left\{
    		z\in U_i:\dist(z,\partial U_i)>\varepsilon\right\};
    	\end{align*}    	
    	\item each function $f_i$ satisfies a Lipschitz condition
    	with constant $M$;    	
    	\item for some Cartesian coordinate system
    	$(\xi_{i,1},\ldots,\xi_{i,d})$ in $U_i$,
    	$\Omega\cap U_i$ is represented by the inequality
    	\begin{align*}
    		\xi_{i,d}<f_i(\xi_{i,1},\ldots,\xi_{i,d-1}).
    	\end{align*}
    \end{enumerate}
    }     
\end{definition}
}
\alert{
\begin{definition}[$(\varepsilon, \delta)$-Domain]\label{def:epsdelta}
    An open set $\Omega$ is called an $(\varepsilon, \delta)$-domain if
    for any $x, y\in\Omega$, satisfying $\alertt{\|x-y\|_2}\leq\delta$, there exists
    a rectifiable path $\Gamma\alertt{\subset\Omega}$ of length $\leq C_0\alertt{\|x-y\|_2}$
    \alertt{for some constant $C_0>0$},
    connecting $x$ and $y$, such that for each
    $z\in\Gamma$,
    $$\dist(z,\partial\Omega)\geq\varepsilon\min\left\{\alertt{\|z-x\|_2, \|z-y\|_2}\right\}.$$
\end{definition}}

\alert{
The inclusions between the different domain types are as follows
\begin{align*}
    \text{special Lipschitz} \Rightarrow
    \text{strong locally Lipschitz} \Rightarrow
    (\varepsilon, \delta)\text{-domain}.
\end{align*}
These domain types are not necessarily bounded and the special case
$\Omega=\R^d$ trivially satisfies the strong local Lipschitz condition.
}

\subsection{Neural Networks}\label{sec:nn}
We briefly introduce the mathematical description and notation we
use for NNs \alertt{throughout} this work. Specifically, we will only consider
feed-forward NNs. In \Cref{fig:ann}, we sketch a pictorial representation of
a feed-forward NN.

\begin{figure}[ht]
    \center
    \tikzset{every picture/.style={line width=0.75pt}} 

\begin{tikzpicture}[x=0.75pt,y=0.75pt,yscale=-.7,xscale=.7]

\draw    (355.33,27.33) -- (503.73,66.49) ;
\draw [shift={(505.67,67)}, rotate = 194.78] [color={rgb, 255:red, 0; green, 0; blue, 0 }  ][line width=0.75]    (10.93,-3.29) .. controls (6.95,-1.4) and (3.31,-0.3) .. (0,0) .. controls (3.31,0.3) and (6.95,1.4) .. (10.93,3.29)   ;
\draw    (355.33,124.33) -- (503.73,84.85) ;
\draw [shift={(505.67,84.33)}, rotate = 525.1] [color={rgb, 255:red, 0; green, 0; blue, 0 }  ][line width=0.75]    (10.93,-3.29) .. controls (6.95,-1.4) and (3.31,-0.3) .. (0,0) .. controls (3.31,0.3) and (6.95,1.4) .. (10.93,3.29)   ;
\draw    (355.33,233.33) -- (503.75,187.92) ;
\draw [shift={(505.67,187.33)}, rotate = 522.99] [color={rgb, 255:red, 0; green, 0; blue, 0 }  ][line width=0.75]    (10.93,-3.29) .. controls (6.95,-1.4) and (3.31,-0.3) .. (0,0) .. controls (3.31,0.3) and (6.95,1.4) .. (10.93,3.29)   ;
\draw    (355.33,233.33) -- (512.87,86.86) ;
\draw [shift={(514.33,85.5)}, rotate = 497.08] [color={rgb, 255:red, 0; green, 0; blue, 0 }  ][line width=0.75]    (10.93,-3.29) .. controls (6.95,-1.4) and (3.31,-0.3) .. (0,0) .. controls (3.31,0.3) and (6.95,1.4) .. (10.93,3.29)   ;
\draw    (355.33,124.33) -- (503.75,169.42) ;
\draw [shift={(505.67,170)}, rotate = 196.9] [color={rgb, 255:red, 0; green, 0; blue, 0 }  ][line width=0.75]    (10.93,-3.29) .. controls (6.95,-1.4) and (3.31,-0.3) .. (0,0) .. controls (3.31,0.3) and (6.95,1.4) .. (10.93,3.29)   ;
\draw    (231.33,84.33) -- (343.85,33.33) ;
\draw [shift={(345.67,32.5)}, rotate = 515.61] [color={rgb, 255:red, 0; green, 0; blue, 0 }  ][line width=0.75]    (10.93,-3.29) .. controls (6.95,-1.4) and (3.31,-0.3) .. (0,0) .. controls (3.31,0.3) and (6.95,1.4) .. (10.93,3.29)   ;
\draw    (231.33,84.33) -- (342.12,123.66) ;
\draw [shift={(344,124.33)}, rotate = 199.55] [color={rgb, 255:red, 0; green, 0; blue, 0 }  ][line width=0.75]    (10.93,-3.29) .. controls (6.95,-1.4) and (3.31,-0.3) .. (0,0) .. controls (3.31,0.3) and (6.95,1.4) .. (10.93,3.29)   ;
\draw    (231.33,167.33) -- (342.27,232.32) ;
\draw [shift={(344,233.33)}, rotate = 210.36] [color={rgb, 255:red, 0; green, 0; blue, 0 }  ][line width=0.75]    (10.93,-3.29) .. controls (6.95,-1.4) and (3.31,-0.3) .. (0,0) .. controls (3.31,0.3) and (6.95,1.4) .. (10.93,3.29)   ;
\draw    (231.33,84.33) -- (347.38,221.97) ;
\draw [shift={(348.67,223.5)}, rotate = 229.87] [color={rgb, 255:red, 0; green, 0; blue, 0 }  ][line width=0.75]    (10.93,-3.29) .. controls (6.95,-1.4) and (3.31,-0.3) .. (0,0) .. controls (3.31,0.3) and (6.95,1.4) .. (10.93,3.29)   ;
\draw    (231.33,167.33) -- (342.76,131.12) ;
\draw [shift={(344.67,130.5)}, rotate = 522] [color={rgb, 255:red, 0; green, 0; blue, 0 }  ][line width=0.75]    (10.93,-3.29) .. controls (6.95,-1.4) and (3.31,-0.3) .. (0,0) .. controls (3.31,0.3) and (6.95,1.4) .. (10.93,3.29)   ;
\draw    (231.33,167.33) -- (347.33,38.98) ;
\draw [shift={(348.67,37.5)}, rotate = 492.1] [color={rgb, 255:red, 0; green, 0; blue, 0 }  ][line width=0.75]    (10.93,-3.29) .. controls (6.95,-1.4) and (3.31,-0.3) .. (0,0) .. controls (3.31,0.3) and (6.95,1.4) .. (10.93,3.29)   ;
\draw    (80.34,209.67) -- (224.09,96.74) ;
\draw [shift={(225.67,95.5)}, rotate = 501.85] [color={rgb, 255:red, 0; green, 0; blue, 0 }  ][line width=0.75]    (10.93,-3.29) .. controls (6.95,-1.4) and (3.31,-0.3) .. (0,0) .. controls (3.31,0.3) and (6.95,1.4) .. (10.93,3.29)   ;
\draw    (80.34,126.67) -- (217.73,90.02) ;
\draw [shift={(219.67,89.5)}, rotate = 525.06] [color={rgb, 255:red, 0; green, 0; blue, 0 }  ][line width=0.75]    (10.93,-3.29) .. controls (6.95,-1.4) and (3.31,-0.3) .. (0,0) .. controls (3.31,0.3) and (6.95,1.4) .. (10.93,3.29)   ;
\draw    (80.34,47.67) -- (218.07,83.83) ;
\draw [shift={(220,84.33)}, rotate = 194.71] [color={rgb, 255:red, 0; green, 0; blue, 0 }  ][line width=0.75]    (10.93,-3.29) .. controls (6.95,-1.4) and (3.31,-0.3) .. (0,0) .. controls (3.31,0.3) and (6.95,1.4) .. (10.93,3.29)   ;
\draw    (80.34,211.67) -- (219.74,174.02) ;
\draw [shift={(221.67,173.5)}, rotate = 524.89] [color={rgb, 255:red, 0; green, 0; blue, 0 }  ][line width=0.75]    (10.93,-3.29) .. controls (6.95,-1.4) and (3.31,-0.3) .. (0,0) .. controls (3.31,0.3) and (6.95,1.4) .. (10.93,3.29)   ;
\draw    (80.34,126.67) -- (218.08,166.77) ;
\draw [shift={(220,167.33)}, rotate = 196.23] [color={rgb, 255:red, 0; green, 0; blue, 0 }  ][line width=0.75]    (10.93,-3.29) .. controls (6.95,-1.4) and (3.31,-0.3) .. (0,0) .. controls (3.31,0.3) and (6.95,1.4) .. (10.93,3.29)   ;
\draw  [draw opacity=0][fill={rgb, 255:red, 208; green, 2; blue, 27 }  ,fill opacity=1 ] (80.34,40) -- (88.68,47.67) -- (80.34,55.33) -- (72,47.67) -- cycle ;
\draw  [draw opacity=0][fill={rgb, 255:red, 208; green, 2; blue, 27 }  ,fill opacity=1 ] (80.34,119) -- (88.68,126.67) -- (80.34,134.33) -- (72,126.67) -- cycle ;
\draw  [draw opacity=0][fill={rgb, 255:red, 208; green, 2; blue, 27 }  ,fill opacity=1 ] (80.34,204) -- (88.68,211.67) -- (80.34,219.33) -- (72,211.67) -- cycle ;
\draw  [draw opacity=0][fill={rgb, 255:red, 248; green, 231; blue, 28 }  ,fill opacity=1 ] (220,84.33) .. controls (220,78.07) and (225.07,73) .. (231.33,73) .. controls (237.59,73) and (242.67,78.07) .. (242.67,84.33) .. controls (242.67,90.59) and (237.59,95.67) .. (231.33,95.67) .. controls (225.07,95.67) and (220,90.59) .. (220,84.33) -- cycle ;
\draw  [draw opacity=0][fill={rgb, 255:red, 248; green, 231; blue, 28 }  ,fill opacity=1 ] (220,167.33) .. controls (220,161.07) and (225.07,156) .. (231.33,156) .. controls (237.59,156) and (242.67,161.07) .. (242.67,167.33) .. controls (242.67,173.59) and (237.59,178.67) .. (231.33,178.67) .. controls (225.07,178.67) and (220,173.59) .. (220,167.33) -- cycle ;
\draw  [draw opacity=0][fill={rgb, 255:red, 248; green, 231; blue, 28 }  ,fill opacity=1 ] (344,27.33) .. controls (344,21.07) and (349.07,16) .. (355.33,16) .. controls (361.59,16) and (366.67,21.07) .. (366.67,27.33) .. controls (366.67,33.59) and (361.59,38.67) .. (355.33,38.67) .. controls (349.07,38.67) and (344,33.59) .. (344,27.33) -- cycle ;
\draw  [draw opacity=0][fill={rgb, 255:red, 248; green, 231; blue, 28 }  ,fill opacity=1 ] (344,124.33) .. controls (344,118.07) and (349.07,113) .. (355.33,113) .. controls (361.59,113) and (366.67,118.07) .. (366.67,124.33) .. controls (366.67,130.59) and (361.59,135.67) .. (355.33,135.67) .. controls (349.07,135.67) and (344,130.59) .. (344,124.33) -- cycle ;
\draw  [draw opacity=0][fill={rgb, 255:red, 248; green, 231; blue, 28 }  ,fill opacity=1 ] (344,233.33) .. controls (344,227.07) and (349.07,222) .. (355.33,222) .. controls (361.59,222) and (366.67,227.07) .. (366.67,233.33) .. controls (366.67,239.59) and (361.59,244.67) .. (355.33,244.67) .. controls (349.07,244.67) and (344,239.59) .. (344,233.33) -- cycle ;
\draw  [draw opacity=0][fill={rgb, 255:red, 126; green, 211; blue, 33 }  ,fill opacity=1 ] (505.67,67) -- (523,67) -- (523,84.33) -- (505.67,84.33) -- cycle ;
\draw  [draw opacity=0][fill={rgb, 255:red, 126; green, 211; blue, 33 }  ,fill opacity=1 ] (505.67,170) -- (523,170) -- (523,187.33) -- (505.67,187.33) -- cycle ;

\end{tikzpicture}
    \caption{Example of a feed-forward neural network. On the left we have the \emph{input} nodes marked in red that
represent input data to the network.
The yellow nodes are the \emph{neurons} that
perform some simple operations on the input.
The edges between the nodes represent \emph{connections} that transfer (after
possibly applying an affine transformation) the
output of one node into the input of another.
The final green nodes are the \emph{output} nodes.
In this particular example the
number of \emph{layers} $L$ is three, with two \emph{hidden layers}.}
    \label{fig:ann}
\end{figure}
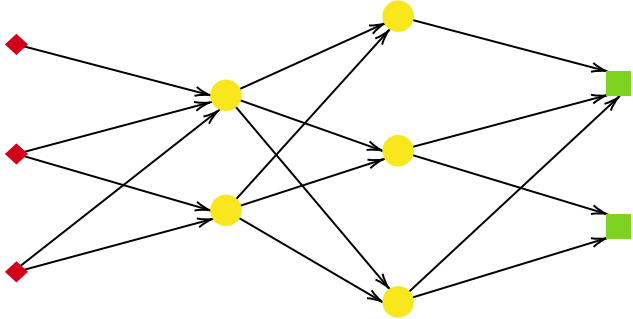

Input values are passed on to the first layer
of \emph{neurons} after possibly undergoing an affine transformation.
In the neurons, an \emph{activation} function
is applied to the transformed input values. The result again undergoes an affine
transformation and is passed
to the next layer and so on, until the output layer is reached.

The number of inputs and outputs
is typically determined by the intended application.
Specifying the architecture of such an NN amounts to choosing
the number of layers, the number of neurons in each hidden layer,
the activation functions and
the connections or, equivalently, the position
of the non-zero weights in the affine transformations.
The process of training then consists of determining said weights.

We formalize our description of the considered mathematical objects.
Let $L\in\N$ be the number of layers,
$N_0$ the number of inputs,
$N_L$ the number of outputs
and $N_1,\ldots, N_{L-1}$ the number of neurons
in each hidden layer.
A neural network $\Phi$ can be described by the tuple
\begin{align*}
    \Phi:=((T_1,\bs\sigma_1),\ldots,\alert{,(T_{L-1},\bs\sigma_{L-1}),(T_L)}),
\end{align*}
where for each $1\leq l\leq L$, $T_l$ is an affine transformation
\begin{align}\label{eq:affmaps}
    T_l:\R^{N_{l-1}}\rightarrow\R^{N_l},\quad
    x\mapsto A_l x+b_l,\quad
    A_l\in\R^{N_{l}\times N_{l-1}},\;b_l\in\R^{N_l},
\end{align}
and $\bs\sigma_l:\R^{N_l}\rightarrow\R^{N_l}$ is a (nonlinear)
function, usually applied component-wise
as $$x\mapsto(\sigma^{\alert{(1)}}_l(x_1),\ldots,\sigma^{\alert{(N_l)}}_l(x_{N_l})).$$
In this work we will use RePU activation functions, i.e.,
\begin{align}\label{eq:nl}
    \sigma^{\alert{(i)}}_l\in\{\id_\R,\rho_r\},\quad
    \rho_r(t):=\max\{0,t\}^{\alert{r}},\quad1\leq l\leq L-1,\;\alert{r\in\N.}
\end{align}
where $\id_\R:\R\rightarrow\R$ is the identity map and
for $r=1$, $\rho_1$ is referred to as the
\emph{rectified linear unit (ReLU)}.
We allow for the possibility of a non-strict network,
i.e., an activation function is either $\id_\R$ or $\rho_r$.
Another possibility is a \emph{strict} network where each activation
function is necessarily $\rho_r$ (with the exclusion of
the output nodes). But, as was shown in \cite{gribonval:hal-02117139},
the approximation theoretic properties of both are the same and thus,
for our work, it is irrelevant.

Let $\aff(N_{l-1},N_l)$ denote the set of affine maps as in
\eqref{eq:affmaps} and $\nl(N_l,r)$ denote the set of activation functions
as in \eqref{eq:nl}. For fixed $N_0$, $N_L$,
define
\begin{align*}
    &\repu^{r,N_0,N_L}:=\\
    &\bigcup_{L\in\N}\;\bigcup_{(N_1,\ldots,N_{\alert{L-1}})\in\N^{\alert{L-1}}}
     \aff(N_0,N_1)\times\nl(N_1,r)\times\cdots\times
      \alert{\nl(N_{L-1},r)\times\aff(N_{L-1},N_L)},\\
      &\relu^{N_0,N_L}:=\repu^{1,N_0,N_L},
\end{align*}
and the realization map $\rlz:\repu^{r,N_0,N_L}\rightarrow
(\R^{N_L})^{\R^{N_0}}$ by
\begin{align*}
    \rlz(\Phi):=\alert{T_L\circ\bs \sigma_{L-1}}\circ\cdots\bs \sigma_1\circ T_1.
\end{align*}

\subsection{Approximation Classes}\label{sec:app}
In this work, we will \alert{derive} results in the
approximation theoretic framework introduced
in \cite{gribonval:hal-02117139}.
Before we do so, let us first recall the definition
of approximation \alert{classes}.

Let $X$ be a quasi-normed linear space, $\tool\subset X$ subsets of $X$
for $n\in\N_0$ and $\Sigma:= (\tool)_{n\in \N_0}$ an approximation tool.
Define the best approximation error
\begin{align*}
E(f,{\tool})_{X}:=\inf_{\varphi\in\tool}\norm{f-\varphi}[X].
\end{align*}
With this we define approximation classes as

\begin{definition}[Approximation Classes]\label{def:asq}
    For any $f\in X$ and $\alpha>0$, define the quantity
    \begin{align*}
        \norm{f}[A^\alpha_q]:=
        \begin{cases}
            \left(\sum_{n=1}^\infty[n^\alpha E(f,{\tool})_X]^q\frac{1}{n}\right)^{1/q},&\quad 0<q<\infty,\\
            \sup_{n\geq 1}[n^\alpha E(f,{\tool})_X],&\quad q=\infty.
        \end{cases}
    \end{align*}
    The approximation classes $A^\alpha_q$
    of $\Sigma = (\tool)_{n\in \N_0}$ are defined by  
    \begin{align*}
       A^\alpha_q(X,\Sigma):=\left\{f\in X:\; \norm{f}[A^\alpha_q]<\infty\right\}.
    \end{align*}
\end{definition}
The utility of using these classes comes to light only if the sets
$\tool$ satisfy certain properties. This was discussed in detail
in \cite{gribonval:hal-02117139} and the relevant properties were shown to hold for
RePU networks.

We perform approximation in $X=L^p(\Omega)$ for $1<p\leq\infty$,
\alert{we comment on the case $0<p\leq 1$ in \Cref{rem:hardy}.}
\alert{For the domain $\Omega$, we require only the existence of an extension
operator bounded in the Besov norm.
From \cite{devore1993besov,DeVore1984}, we know this
can be ensured for $(\varepsilon, \delta)$-\emph{domains}
and for Lipschitz domains, see also \Cref{thm:extension}.}
We \alert{abbreviate}
\begin{align*}
    E(f,{\tool})_{\alert{X}}:=E(f,{\tool})_{\alert{X}}.
\end{align*}
As a measure of complexity we will use the number of non-zero weights.
\alert{For} a given $\Phi\in\repu^{r,d_1,d_2}$
for some $d_1,d_2\in\N$, the number of non-zero weights is
\begin{align*}
    W(\Phi):=\sum_{l=1}^L\norm{T_l}[\ell^0],\quad
    \norm{T_l}[\ell^0]:=\norm{A_l}[\ell^0],
\end{align*}
$\norm{A_l}[\ell^0]$ \alert{being the} number of non-zero weights of the matrix
$A_l$.
With this we define for any $n\in\N$
\begin{align*}
    \repu^{r,d_1,d_2}_n&:=\left\{\Phi\in\repu^{r,d_1,d_2}:\;\rlz(\Phi)\in X,\;
    W(\Phi)\leq n\right\},\\
    \relu_n^{d_1,d_2}&:=\repu_n^{1,d_1,d_2}.
\end{align*}
The main result of this work then concerns
the approximation classes
\begin{align}\label{eq:appclass}
    A^\alpha_q\left(\alert{X}, \repu^{r,d,1}\right).
\end{align}

\subsection{Main Result}\label{sec:main}
\alert{We summarize results on approximation of isotropic Besov spaces
-- including this work -- in \Cref{tab:result}.}

\renewcommand\arraystretch{1.5}
\begin{table}[ht]
\centering
\begin{tabularx}{\linewidth}[t]{p{1.8cm} p{1.6cm} p{2.9cm} c c c}
\toprule
$X$ & Domain $\Omega$ & Smoothness Class &
\multicolumn{2}{c}{Approximation Rate} & Reference \\
& & & $\repu^{r,d,1}$ & $\relu^{d,1}$ & \\
\midrule
$L^p(\Omega)$\newline
$1<p\leq\infty$ & $(\varepsilon, \delta)$ or\newline
Lipschitz & $B^\alpha_\tau(L^\tau(\Omega))$\newline
$\alpha\geq 1/\tau-1/p$ & $\alpha/d$ & $\sim\alpha/d$ & this work \\
$L^p(\Omega)$\newline
$0<p\leq \infty$ & $[0,1]^d$ & $B^\alpha_\tau(L^\tau(\Omega))$\newline
$\alpha>1/\tau-1/p$ & -- & $\sim\alpha/d$ & \cite{suzuki2018adaptivity}\\
$L^p(\Omega)$\newline
$0<p\leq\infty$ & bounded\newline
Lipschitz & $B^\alpha_p(L^p(\Omega))$\newline
$\alpha<r+\min(1,1/p)$ & $\alpha/d$ & $\alpha/d$ & \cite{gribonval:hal-02117139} \\
\bottomrule
\end{tabularx}
\caption{\alert{Summary of approximation rates for isotropic Besov spaces
with deep rectifier networks.
Lipschitz refers to the strong locally Lipschitz condition
from \Cref{def:Lipschitz}.
We use $\sim\alpha/d$ to indicate algebraic rates with
an additional log term or,
in other words,
any rate strictly less than $\alpha/d$ can be achieved.}}
\label{tab:result}
\end{table}

\alert{The precise statements for the direct estimates can be found in
\Cref{lemma:direct}. These estimates imply a range of interpolated
smoothness spaces \alertt{that} are continuously embedded in the approximation classes
from \eqref{eq:appclass}
\begin{align*}
            (L^p(\Omega), B^\alpha_q(L^\tau(\Omega)))_{\theta/\alpha,\bar q}
            &\hookrightarrow
            A^{\theta/d}_{\bar q}(L^p(\Omega),\repu^{r,d,1}),
\end{align*}
see the \Cref{thm:embedd} and definitions in \Cref{sec:besov}.}

\alert{The required depth to achieve the optimal rates
from \Cref{tab:result} for RePU networks with
$r\geq 2$ scales at most logarithmically in the smoothness order $\alpha$
and, in particular, is independent of the approximation error.
For ReLU networks, the required depth scales at most logarithmically
with the approximation error and at most \alertt{log-}linearly with
smoothness order $\alpha$.}
\section{Preliminaries on
\alert{Rectifier Network} Approximation}\label{sec:prelim}
In this section, we review recent results on deep RePU
approximation relevant for this work.
We use the notation defined in \Cref{sec:nn}.
The next theorem states
that RePU networks can efficiently reproduce or
approximate multiplication.

\begin{theorem}[Multiplication \cite{YAROTSKY2017103, OSZ19_839, gribonval:hal-02117139}]\label{thm:mult}
    Let $M_d:\R^d\rightarrow\R$ be the multiplication function
    $x\mapsto\prod_{i=1}^d x_i$. Then,
    there exists a constant $C$ such that
    \begin{enumerate}[label=(\roman*)]
        \item    for $r\geq 2$, and $n:=Cd$, there exists a RePU
                    network $\Phi_M\in\repu_{n}^{r, d, 1}$ such that
                    \begin{align*}
                        M_d=\rlz(\Phi_M),
                    \end{align*}
                    \alert{where the depth of $\Phi_M$ depends at most logarithmically
                    on $d$.}
        \item    \alert{For} $r=1$, any $K>0$ and any \alert{$0<\varepsilon<1$}, and
                    $n:=Cd\log(dK^d/\varepsilon)$,
                    there exists a ReLU network $\Phi_M^\varepsilon\in\relu^{d, 1}_n$ with
                    \begin{align*}
                        \norm{M_d-\rlz(\Phi_M^\varepsilon)}[{L^\infty([-K,K]^d)}]\leq\varepsilon,
                    \end{align*}
                    \alert{where the depth is at most a constant multiple of
                    $(1+\log(d)\log(dK^d/\varepsilon))$.}
    \end{enumerate}
\end{theorem}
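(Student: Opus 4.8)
The plan is to reduce the product of $d$ numbers to iterated pairwise products arranged on a balanced binary tree, and to realize the binary product $M_2(x,y)=xy$ via the polarization identity $xy=\tfrac14\big((x+y)^2-(x-y)^2\big)$, so that the whole construction rests on realizing the univariate square $t\mapsto t^2$ -- exactly when $r\ge 2$, approximately when $r=1$.

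\emph{Part (i).} For $r=2$ one has the pointwise identity $t^2=\rho_2(t)+\rho_2(-t)$, so $t\mapsto t^2$ is the realization of a $\repu$ network with one nonlinear layer, two neurons, and $O(1)$ nonzero weights; for $r\ge 3$ the exact representation of the degree-$2$ polynomial $t\mapsto t^2$ by a $\repu_r$ network of size and depth $O(1)$ follows from the polynomial-reproduction property of $\rho_r$ established in \cite{OSZ19_839, gribonval:hal-02117139}. Composing this square block (in two parallel copies) with the affine map $(x,y)\mapsto(x+y,x-y)$ before it and $\tfrac14(u,v)\mapsto u-v$ after it gives a network $\Phi_2$ with $\rlz(\Phi_2)(x,y)=xy$, of size and depth $O(1)$. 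I would then compose copies of $\Phi_2$ along a balanced binary tree with $d$ leaves: the tree has $\lceil\log_2 d\rceil$ levels and $d-1$ internal nodes, each carrying one $\Phi_2$ block, with the blocks on a given level run in parallel (block-diagonal affine maps) and values not yet combined carried forward through identity activation channels. Since size and depth are additive under the composition and parallelization operations of the formalism of \Cref{sec:nn}, this yields $\Phi_M$ with $W(\Phi_M)=O(d)$ and $\depth(\Phi_M)=O(\log d)$.

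\emph{Part (ii).} For $r=1$ the square cannot be represented exactly, so I would substitute Yarotsky's sawtooth approximation \cite{YAROTSKY2017103}: with the tent map $g(t)=2\rho_1(t)-4\rho_1(t-\tfrac12)+2\rho_1(t-1)$ and its $m$-fold composition $g^{[m]}$, the function $s_m(t):=t-\sum_{k=1}^m 4^{-k}g^{[k]}(t)$ is a $\relu$ network with $O(m)$ weights and depth $O(m)$ satisfying $\|s_m-(\cdot)^2\|_{L^\infty([0,1])}\le 2^{-2m-2}$. Precomposing with $|t|=\rho_1(t)+\rho_1(-t)$ and rescaling, one gets for each length $L$ a $\relu$ block of size and depth $O(m)$ approximating $t\mapsto t^2$ on $[-L,L]$ with error $\lesssim L^2 2^{-2m}$; combining this by polarization, and composing with an $O(1)$-size clipping map $t\mapsto\rho_1(t+B)-\rho_1(t-B)-B$ to keep outputs in range, produces an approximate binary product. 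Running the balanced binary tree again while tracking the error: at level $j$ the exact partial products have modulus $\le K^{2^{j}}$, the approximate binary product has Lipschitz constant at most this same bound on that range, and the per-block error is $\lesssim K^{2^{j+1}}2^{-2m}$; a telescoping estimate over the $\lceil\log_2 d\rceil$ levels bounds the accumulated $L^\infty([-K,K]^d)$ error by $\lesssim d^{O(1)}K^{O(d)}2^{-2m}$. Choosing $m=O\big(\log(dK^d/\varepsilon)\big)$ makes this $\le\varepsilon$, and the resulting network has $(d-1)\cdot O(m)=O\big(d\log(dK^d/\varepsilon)\big)$ nonzero weights and depth $O(\log d)\cdot O(m)=O\big(1+\log(d)\log(dK^d/\varepsilon)\big)$.

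\emph{Main obstacle.} The only genuinely delicate point is the error bookkeeping in part (ii): one is multiplying \emph{approximations} of partial products rather than exact values, so the estimate must simultaneously control the intrinsic per-block error, the amplification of incoming errors by the range-dependent Lipschitz constant of the approximate product, and the geometric growth $K^{2^j}$ of the intermediate magnitudes -- all while ensuring the clipping maps never distort a value that is legitimately in range. Collapsing the resulting $K$- and $d$-dependent factors into the stated $\log(dK^d/\varepsilon)$ is routine but tedious; everything else reduces to composing and parallelizing networks in the formalism of \Cref{sec:nn}.
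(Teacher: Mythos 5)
Your construction is correct and is precisely the standard argument behind this result in the cited sources (\cite{YAROTSKY2017103, OSZ19_839, gribonval:hal-02117139}): polarization reduces multiplication to squaring, squaring is exact for $r\ge 2$ via $t^2=\rho_2(t)+\rho_2(-t)$ and approximate for $r=1$ via the sawtooth construction, and a balanced binary tree with per-level clipping and Lipschitz error propagation yields the stated size and depth bounds. The paper itself states this theorem as a preliminary without proof, so there is nothing further to compare against.
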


This in turn implies RePU networks can efficiently
reproduce or approximate \alertt{piecewise} polynomials.

\begin{theorem}[\alertt{Piecewise} Polynomials \cite{YAROTSKY2017103, SchwabFEM,
gribonval:hal-02117139}]\label{thm:relupoly}
    Let $v:\R\rightarrow\R$ be a \alertt{piecewise} polynomial
    with $N_v$ pieces, of maximum degree $t\in\N_{\geq 0}$ and
    with compact support of measure $S:=|\supp(v)|<\infty$.
    Then, there exists a constant $C>0$ depending on
    $N_v$, $t$ and $r$ such that
    \begin{enumerate}[label=(\roman*)]
        \item    for $r\geq 2$ \alert{and any $\varepsilon>0$},
        there exists a RePU network $\Phi\in\repu^{r, 1, 1}_C$
        with
        \begin{align*}
             \norm{v-\rlz(\Phi)}[L^\infty(\R)]\leq\varepsilon,
        \end{align*}
        \alert{where the complexity of the network $\Phi$ is
        independent of $\varepsilon$,
        \alertt{the depth is of the order $\mc O(\log(t))$
        and $C=\mc O(N_v\log(t))$.}}
        \item    \alert{For} $r=1$, the constant $C$ additionally depends on $S$
        and $\norm{v}[L^\infty(\R)]$,
        and for any \alert{$0<\varepsilon<1$}
        there exists a ReLU network $\Phi\in\relu^{1, 1}_n$ with
        $n:=\alert{C(1+\log(\varepsilon^{-1}))}$
        and the same support as $v$, such that
        \begin{align*}
            \norm{v-\rlz(\Phi)}[L^\infty(\R)]\leq\varepsilon,
        \end{align*}
        \alert{where the depth of the network is at most
        \alertt{of the order
        $\mc O(t\log(t)\log(\varepsilon^{-1}))$
        and $C=\mc O(N_vt\log(t))$.}
        For the special case $t=1$, i.e., if $v$ is \alertt{piecewise} linear, it
        can be reproduced exactly with $v=\rlz(\Phi)$
        such that $\Phi\in\relu^{1, 1}_{C(1+N_v)}$ and has depth two.}
    \end{enumerate}
\end{theorem}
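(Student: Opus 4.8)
The plan is to reduce the statement to two facts already at hand: \Cref{thm:mult}, which says RePU networks realize products exactly for $r\ge2$ and approximately for $r=1$ (hence polynomials), and the observation that truncated powers are cheap building blocks. I first record the elementary decomposition underlying everything. Since every realization $\rlz(\Phi)$ is continuous for $r\ge1$, an $L^\infty$ bound forces $v\in C^0(\R)$, which is the situation in all our applications, so I treat $v$ continuous. Writing the breakpoints as $a_1<\dots<a_m$ with $m\le N_v$, letting $q_j$ denote the polynomial piece of degree $\le t$ on $[a_j,a_{j+1})$ and setting $q_0\equiv0$ (compact support), a telescoping identity gives
\begin{align*}
    v(x)=\sum_{j=1}^{m}(q_j-q_{j-1})(x)\,\mathbf 1_{[a_j,\infty)}(x)
        =\sum_{j=1}^{m}(x-a_j)_+\,h_j(x),\qquad\deg h_j\le t-1 ,
\end{align*}
the factor $x-a_j$ being extractable because $(q_j-q_{j-1})(a_j)=0$ by continuity. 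Expanding each $h_j$ around $a_j$, this is the same as an expansion of $v$ in the truncated-power system $\{(x-a_j)_+^k:\,1\le j\le m,\ 1\le k\le t\}$ (equivalently a B-spline expansion with $\mathcal{O}(N_v)$ terms). Hence it suffices to realize the maps $x\mapsto(x-a)_+^k$ and the polynomial factors, and to add $\mathcal{O}(N_v)$ of them.

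For part (i), $r\ge2$, the only non-exact ingredient is the low-order truncated powers. For $k\ge r$ one has the exact identity
\begin{align*}
    (x-a)_+^k=\rho_r(x-a)\,(x-a)^{k-r},
\end{align*}
valid because the polynomial factor is harmless on $\{x\le a\}$, where $\rho_r(x-a)=0$; the monomial $(x-a)^{k-r}$ is formed by repeated squaring in depth $\mathcal{O}(\log t)$ and the product is realized exactly by \Cref{thm:mult}(i) with $\mathcal{O}(1)$ extra weights. For $1\le k<r$ the power $(x-a)_+^k$ is only $C^{k-1}$ and cannot be produced exactly by a fixed $\rho_r$; it is approximated to $L^\infty$-error $\mathcal{O}(\delta)$ by a suitably scaled $(r-k)$-th order finite difference of translates $\rho_r(x-a-i\delta)$, $0\le i\le r-k$, since such a combination mimics $\frac{k!}{r!}\frac{d^{r-k}}{d x^{r-k}}\rho_r$; the topology of this subnetwork is fixed and only its weights depend on $\delta$, so its size is independent of $\varepsilon$. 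Assembling the $\mathcal{O}(N_v)$ contributions, sharing the monomial subnetworks across breakpoints and taking $\delta$ small enough, gives $\Phi\in\repu^{r,1,1}$ of depth $\mathcal{O}(\log t)$ and of the claimed size with $\|v-\rlz(\Phi)\|_{L^\infty(\R)}\le\varepsilon$.

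For part (ii), $r=1$, the piecewise-linear case $t=1$ is exact: a continuous piecewise-linear $v$ with $N_v$ pieces equals $c_0+\sum_j c_j\rho_1(x-a_j)$, one hidden layer with $\mathcal{O}(N_v)$ weights and depth two, and the conditions $c_0=0$, $\sum_j c_j=0$, $\sum_j c_j a_j=0$ forced by $v\equiv0$ outside its support make $\rlz(\Phi)$ vanish exactly where $v$ does. For $t\ge2$, ReLU cannot produce the factors $h_j$ exactly, so we approximate: on the bounded interval carrying $\supp(v)$, the coefficients of the pieces $q_j$, hence of the $h_j$, are controlled in terms of $N_v$, $t$, $S$ and $\|v\|_{L^\infty(\R)}$, so each $h_j$ is approximated by a ReLU subnetwork obtained by composing $\mathcal{O}(\log t)$ of the approximate multiplications of \Cref{thm:mult}(ii); each factor $(x-a_j)_+=\rho_1(x-a_j)$ is exact; the products $(x-a_j)_+h_j$ and their sum are formed approximately; and the result is multiplied by a fixed continuous piecewise-linear cut-off $\chi$ equal to $1$ on $\supp(v)$ except on a boundary strip of width $\sim\varepsilon$, where it ramps down, and $0$ outside — this repairs the support, so $\supp(\rlz(\Phi))\subseteq\supp(v)$, and, since $v$ vanishes on $\partial\supp(v)$, does not spoil the error. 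Setting all intermediate accuracies to $\sim\varepsilon/(N_v t)$ and tracking constants yields a ReLU network with $n=\mathcal{O}(N_v t\log t\,(1+\log\varepsilon^{-1}))$ weights and depth $\mathcal{O}(t\log t\log\varepsilon^{-1})$.

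The main obstacle is the error-and-size bookkeeping in part (ii): the approximate multiplications of \Cref{thm:mult}(ii) compound under composition when building monomials and the polynomial pieces, so the per-step accuracy must be chosen as a function of $t$, $N_v$, $S$, $\|v\|_{L^\infty(\R)}$ and $\varepsilon$ to make the accumulated error $\le\varepsilon$, all while keeping the $\varepsilon$-dependence logarithmic and ensuring, via $\chi$, that the network has exactly the prescribed support. Part (i) is comparatively mechanical once the identity $(x-a)_+^k=\rho_r(x-a)(x-a)^{k-r}$ and the finite-difference approximation of the low-order truncated powers are in place.
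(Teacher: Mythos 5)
The paper itself offers no proof of this theorem --- it is imported wholesale from the cited references --- so I am judging your argument on its own terms. Your overall architecture (telescope $v$ into truncated powers $(x-a_j)_+^k$ using continuity at the breakpoints, exact multiplication for $r\ge2$, approximate multiplication plus a piecewise-linear cutoff for $r=1$) is the standard one, and your part (ii), including the exact depth-two representation for $t=1$ and the support-repairing cutoff $\chi$, is essentially the argument of the cited works; the error/size bookkeeping you defer is routine. You are also right that continuity of $v$ must be assumed for the $L^\infty(\R)$ statement to be meaningful.

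There is, however, a genuine gap in part (i), in the treatment of the low-order truncated powers $1\le k<r$. The forward difference $\delta^{k-r}\Delta_\delta^{r-k}\rho_r(\cdot-a)$ does \emph{not} approximate $(x-a)_+^k$ to $L^\infty(\R)$-error $\mathcal O(\delta)$: for $x>a+(r-k)\delta$ it equals a degree-$k$ polynomial whose leading coefficient matches $(x-a)^k$ but whose lower-order coefficients are nonzero multiples of $\delta$, so the error behaves like $\delta\,|x-a|^{k-1}$ and is unbounded for $k\ge2$. Since the exact expansion $v=\sum_{j,k}c_{j,k}(x-a_j)_+^k$ vanishes outside $\supp(v)$ only through exact cancellation of these polynomial tails, replacing each term by a surrogate with a non-cancelling tail yields an approximant at infinite $L^\infty(\R)$-distance from $v$. (For $r=2$ only $k=1$ occurs, the tail error is the constant $\delta/2$, and your argument survives; it breaks for $r\ge3$ and $t\ge2$.) The repair is to use \emph{symmetric} differences: for instance $(4\delta)^{-1}\bigl[\rho_2(y+\delta)-\rho_2(y-\delta)\bigr]$ equals $\rho_1(y)$ \emph{exactly} for $|y|>\delta$ and differs from it by at most $\delta$ on $[-\delta,\delta]$, and analogous centered differences of $\rho_r$ reproduce $(x-a)_+^k$ exactly off an $\mathcal O(\delta)$-neighbourhood of $a$; with error supported on a set of length $\mathcal O(\delta)$ the global bound follows. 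You need this exactness-off-a-neighbourhood property, not merely a pointwise $\mathcal O(\delta)$ bound --- note that in part (ii) you introduce $\chi$ precisely to control the exterior, but no analogous device appears in part (i). A secondary quibble: the coefficients of the factors $h_j$ are \emph{not} controlled by $N_v$, $t$, $S$ and $\|v\|_{L^\infty(\R)}$ alone (Markov's inequality brings in the minimal piece length), so the constants in part (ii) acquire a dependence on the breakpoint geometry that the statement, read literally, does not admit.
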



The previous result states that RePU networks with $r\geq 2$
can reproduce piece-wise polynomials of any degree at the same
asymptotic cost\footnote{Note that the constants will be, however,
affected by the degree.}.
This suggests the following \emph{saturation property}.

\begin{theorem}[Saturation Property \cite{gribonval:hal-02117139}]
    For any $r\geq 2$, $\alpha>0$, any $d_1,d_2\in\N$,
    \alert{$0<p\leq\infty$ and $\Omega\subset\R^d$
    a Borel-measurable open set with nonzero measure,}
    the approximation spaces defined in \Cref{sec:app} coincide
    \begin{align*}
        A^\alpha_q(\alert{L^p(\Omega)}, \repu^{2, d_1, d_2})=A^\alpha_q(\alert{L^p(\Omega)}, \repu^{r,d_1,d_2}).
    \end{align*}
\end{theorem}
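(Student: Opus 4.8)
The plan is to reduce the claimed identity of sets to an equivalence, up to a bounded reindexing, of the two best-approximation error sequences. Concretely, I will produce a constant $c=c(r)\geq 1$ such that, for all $f\in L^p(\Omega)$ and all $n\in\N$,
\begin{align*}
E(f,\repu^{2,d_1,d_2}_{cn})_{L^p(\Omega)}\leq E(f,\repu^{r,d_1,d_2}_{n})_{L^p(\Omega)}
\qquad\text{and}\qquad
E(f,\repu^{r,d_1,d_2}_{cn})_{L^p(\Omega)}\leq E(f,\repu^{2,d_1,d_2}_{n})_{L^p(\Omega)}.
\end{align*}
Since each of the two error sequences is non-increasing in $n$ (because $\repu_n\subseteq\repu_{n+1}$), replacing $n$ by $cn$ in \Cref{def:asq} changes the $A^\alpha_q$ quasi-norm only by a factor depending on $\alpha,q,c$, so the two displayed bounds give $\|f\|_{A^\alpha_q(L^p(\Omega),\repu^{2,d_1,d_2})}\sim\|f\|_{A^\alpha_q(L^p(\Omega),\repu^{r,d_1,d_2})}$ for every $\alpha,q$, which is the assertion. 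Using $E(f,\tool)_X=E(f,\overline{\tool})_X$, the two bounds follow from the inclusions of realization sets
\begin{align*}
\rlz(\repu^{r,d_1,d_2}_n)\subseteq\rlz(\repu^{2,d_1,d_2}_{cn})
\qquad\text{and}\qquad
\rlz(\repu^{2,d_1,d_2}_n)\subseteq\overline{\rlz(\repu^{r,d_1,d_2}_{cn})},
\end{align*}
with closures in $L^p(\Omega)$. The case $r=2$ is trivial, so assume $r\geq 3$.

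\emph{First inclusion: exact emulation of $\rho_r$ by $\rho_2$.} It is enough to emulate a single $\rho_r$-activation by a $\repu^2$ subnetwork of size $O(1)$ (depending only on $r$) and substitute it at each of the at most $W(\Phi)$ such nodes of a given $\Phi\in\repu^{r,d_1,d_2}_n$. This uses the elementary identities, valid for all $t\in\R$,
\begin{align*}
\rho_r(t)=\rho_2(t)^{m}\quad(r=2m),\qquad\qquad\rho_r(t)=t\,\rho_2(t)^{m}\quad(r=2m+1),
\end{align*}
where in the odd case $\rho_2(t)=0$ for $t\leq 0$ lets one replace $\max\{0,t\}$ by $t$. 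By \Cref{thm:mult}(i) (with $d=2$ and activation degree $2$) scalar multiplication is realized exactly by a $\repu^2$ network of $O(1)$ size; composing $O(\log r)$ such multiplications by repeated squaring, and carrying $t$ forward through $\id_\R$ nodes where needed, realizes $\rho_2(t)^m$, resp.\ $t\,\rho_2(t)^m$, exactly by a $\repu^2$ network of size $O(\log r)$. Substituting these gadgets produces $\Psi\in\repu^{2,d_1,d_2}$ with $\rlz(\Psi)=\rlz(\Phi)$ and $W(\Psi)\leq c\,W(\Phi)$.

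\emph{Second inclusion: approximate emulation of $\rho_2$ by $\rho_r$.} Exact emulation is here impossible in general, since a realization of a $\repu^r$ network is $(r-1)$-times continuously differentiable -- in particular $C^2$ -- whereas $\rho_2\notin C^2$, so one passes to the closure. I approximate $\rho_2$ by a fixed $(r-1)$-term divided difference of $\rho_r$: because $\rho_r\in C^{r-1}(\R)$ with $\rho_r^{(r-2)}=\frac{r!}{2}\rho_2$, the function
\begin{align*}
G_h(s):=\frac{2}{r!}\,h^{-(r-2)}\sum_{j=0}^{r-2}(-1)^{r-2-j}\binom{r-2}{j}\,\rho_r(s+jh)
\end{align*}
converges to $\rho_2$ as $h\to 0^{+}$, uniformly on compact sets, and is realized by a $\repu^r$ network with $O(r)$ weights. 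Replacing each $\rho_2$-node of a given $\Phi\in\repu^{2,d_1,d_2}_n$ by $G_h$ yields $\Phi_h\in\repu^{r,d_1,d_2}$ with $W(\Phi_h)\leq c\,W(\Phi)$, and an induction on the depth -- using that the intermediate maps of $\rlz(\Phi)$ are continuous, hence bounded on compacts so that all pre-activations of $\rlz(\Phi_h)$ stay in a fixed compact set, together with Lipschitz continuity of the affine maps -- gives $\rlz(\Phi_h)\to\rlz(\Phi)$ locally uniformly as $h\to 0^+$. For $\Omega$ bounded or $p=\infty$ this already yields $\rlz(\Phi)\in\overline{\rlz(\repu^{r,d_1,d_2}_{cn})}$; for $p<\infty$ on unbounded $\Omega$, where membership in $L^p(\Omega)$ forces $\rlz(\Phi)$ to be compactly supported, one additionally multiplies the network output by a compactly supported $\repu^r$-realizable cutoff equal to $1$ on $\supp\rlz(\Phi)$ (a tensor product of univariate degree-$r$ B-splines composed with the multiplication network, of size $O(1)$), which keeps $\rlz(\Phi_h)$ in $L^p(\Omega)$ and converging to $\rlz(\Phi)$.

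I expect the second inclusion to be the only genuine obstacle: because the smoothness of $\rho_r$ precludes exact emulation, the argument must go through a limit, and the care lies in the depth-by-depth error-propagation estimate and in forcing the emulating networks to remain in $L^p(\Omega)$ on unbounded domains. The first inclusion and the passage from error sequences to approximation classes via monotonicity are routine.
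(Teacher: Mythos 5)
The paper itself offers no proof of this statement --- it is imported from \cite{gribonval:hal-02117139} --- so your attempt can only be measured against that reference and on its own merits. Your overall architecture is the right one and matches the cited source in spirit: reduce the identity of approximation classes to a two-sided comparison of the error sequences up to a bounded reindexing, emulate $\rho_r$ \emph{exactly} by $\rho_2$-networks (via $xy=\tfrac14\bigl((x+y)^2-(x-y)^2\bigr)$, $t^2=\rho_2(t)+\rho_2(-t)$ and repeated squaring), and emulate $\rho_2$ \emph{approximately} by $\rho_r$-networks via the divided difference $h^{-(r-2)}\Delta_h^{r-2}\rho_r\to\rho_r^{(r-2)}=\tfrac{r!}{2}\rho_2$, passing to the closure because $\repu^r$-realizations are $C^{r-1}\subset C^2$ while $\rho_2$ is not. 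The first inclusion and the reduction from error sequences to approximation classes are correct (modulo the standard remark that neurons with no nonzero incoming weights compute constants and must be absorbed into biases before counting "$\rho_r$-nodes" against $W(\Phi)$).

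The genuine gap is in the second inclusion on unbounded domains, which the theorem explicitly permits ($\Omega$ is any open Borel set of positive measure, including $\R^d$). First, your claim that $\rlz(\Phi)\in L^p(\Omega)$ with $p<\infty$ forces compact support is false for general $\Omega$ (take $\Omega=\{(x,y):|y|<e^{-|x|}\}$ and $\rlz(\Phi)(x,y)=\rho_2(y)$); this is repairable, since applying your cutoff unconditionally and splitting $\Omega$ into $\Omega\cap B_R$ and its tail still works for $p<\infty$. The case $p=\infty$ with $\Omega$ unbounded, however, is not covered: locally uniform convergence does not give $L^\infty(\Omega)$-convergence, the cutoff destroys the tail of the target, and the substituted networks need not even realize elements of $L^\infty(\Omega)$, so they violate the admissibility requirement $\rlz(\Phi)\in X$ in the definition of $\repu^{r,d_1,d_2}_n$. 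Concretely, for $r=3$ one has $G_h(s)=s^2+hs+h^2/3$ for $s\geq 0$; the $\repu^2$-network realizing $g=4\rho_2(x)-\rho_2(2x)\equiv 0\in L^\infty(\R)$ is mapped by your node-by-node substitution to $4G_h(x)-G_h(2x)=2hx+h^2$ on $x\geq 0$, which is unbounded for every $h>0$, hence at infinite $L^\infty(\R)$-distance from $g$. Closing this case needs an additional idea --- e.g.\ exploiting that a bounded realization of a $\repu^2$-network is eventually constant along unbounded directions, or a transfer lemma with uniform coefficient control as in the cited reference --- rather than the raw gadget substitution.
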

The saturation property will also be clearly visible in the main
result of this work in \Cref{thm:embedd}.

We conclude by pointing out that RePU networks
can efficiently reproduce
\emph{affine systems}, i.e., linear combinations of functions
that are generated by dilating and shifting
a single \emph{mother} function or, in some cases, a finite
number of mother functions. A prominent example of affine systems
are wavelets which will play an important role for the main result
in \Cref{thm:embedd}.

The reproduction of affine systems by NNs was studied in greater
detail in \cite{AffineOptimal}.
In the following we only mention the properties relevant
for this work.

\begin{theorem}[NN calculus \cite{gribonval:hal-02117139}]\label{thm:calc}
    For any $r\geq 1$, the following properties hold.
    \begin{enumerate}[label=(\roman*)]
        \item\label{it:nn1}    For any $c\in\R$, $n\in\N$,
                    $d_1,d_2\in\N$ and any $\Phi_1\in\repu^{r,d_1,d_2}_n$,
                    there exists $\Phi_2\in\repu^{r,d_1,d_2}_n$ with
                    \begin{align*}
                        c\rlz(\Phi_1)=\rlz(\Phi_2),
                    \end{align*}
                   \alert{where both $\Phi_1$ and $\Phi_2$ have the same depth.} 
        \item\label{it:nn2}    For any $n_1,\ldots,n_N\in\N$, $d_1,d_2\in\N$ and any
                    $\Phi_1\in\repu^{r,d_1,d_2}_{n_1},
                    \ldots,\Phi_N\in\repu^{r,d_1,d_2}_{n_N}$,
                    set $C:=
                    \min\{d_1,d_2\}(\max_i\depth(\Phi_i)-\min_i\depth(\Phi_i))$.
                    Then, for $n:=C+\sum_{i=1}^Nn_i$,
                    there exists $\Phi_{\sum}\in\repu^{r,d_1,d_2}_n$ with
                    \begin{align*}
                        \sum_{i=1}^N\rlz(\Phi_i)=\rlz(\Phi_{\sum}),
                    \end{align*}
                    \alert{where the depth of $\Phi_{\sum}$ is bounded
                    by the maximal depth of all $\Phi_i$'s.}
        \item\label{it:nn3}     For any $n_1,\ldots,n_N\in\N$, $d,d_1,\ldots,d_N\in\N$ and any
                    $$\Phi_1\in\repu^{r,d,d_1}_{n_1},
                    \ldots,\Phi_N\in\repu^{r,d,d_N}_{n_N},$$
                    set $K:=\sum_{i=1}^Nd_i$
                    and $C:=
                    \min\{d,K-1\}(\max_i\depth(\Phi_i)-\min_i\depth(\Phi_i))$.
                    Then, for $n:=C+\sum_{i=1}^Nn_i$,
                    there exists $\Phi_{\times}\in\repu^{r,d,K}_n$ with
                    \begin{align*}
                        (\rlz(\Phi_1),\ldots,\rlz(\Phi_N))=\rlz(\Phi_{\times}),
                    \end{align*}\\
                    \alert{where the depth of $\Phi_{\times}$ is bounded
                    by the maximal depth of all $\Phi_i$'s.}
        \item\label{it:nn4}     For any $n_1,n_2\in\N$,
                    $d_1,d_2,d_3\in\N$,
                    any $\Phi_1\in\repu^{r,d_1,d_2}_{n_1}$ and any
                    $\Phi_2\in\repu^{r,d_2,d_3}_{n_2}$,
                    there exists $\Phi\in\repu^{r,d_1,d_3}_{n_1+n_2}$
                    such that
                    \begin{align*}
                        \rlz(\Phi_2)\circ\rlz(\Phi_1)=\rlz(\Phi),
                    \end{align*}
                    \alert{where the depth of $\Phi$ is simply the sum of the depths of
                    $\Phi_1$ and $\Phi_2$.}
        \item\label{it:nn5}     Let $D_b^a:\R^d\rightarrow\R^d$ denote the affine transformation
                    $x\mapsto ax-b$ for $a\in\R$, $b\in\R^d$. Then,
                    for any $n\in\N$, $d_1,d_2\in\N$, any
                    $\Phi_1\in\repu^{r, d, 1}_n$ and any $a\in\R$, $b\in\R^d$,
                    there exists $\Phi_2\in\repu^{r, d, 1}_n$ with
                    \begin{align*}
                        \rlz(\Phi_1)\circ D_b^a=\rlz(\Phi_2),
                    \end{align*}                                  
                    \alert{where both $\Phi_1$ and $\Phi_2$ are of the same depth.}
    \end{enumerate}
\end{theorem}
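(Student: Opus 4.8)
The plan is to prove each of the five parts by explicit construction of the network tuple, tracking both the number of non-zero weights $W(\Phi)$ and the depth. The key technical device is a "depth-synchronization" gadget: given a network $\Phi_i$ of depth $\ell_i < \ell := \max_j \ell_j$, one can pad it to depth $\ell$ without changing its realization by inserting $\ell - \ell_i$ layers that implement the identity map. On a RePU network this requires writing $\id_\R(t) = \rho_r(t) - \rho_r(-t)$ when $r$ is odd, or a two-layer construction for general $r$, so each padding layer costs a bounded number of weights per coordinate; this is where the factor $\min\{d_1,d_2\}$ (resp.\ $\min\{d,K-1\}$) and the depth-gap term in the definitions of $C$ originate. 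I would establish this padding lemma first (it may already be available from \cite{gribonval:hal-02117139}, in which case I would just cite it), since parts \ref{it:nn2} and \ref{it:nn3} rely on it.

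First I would prove \ref{it:nn1} and \ref{it:nn5}, which are the easiest: scalar multiplication by $c$ is absorbed into the final affine map $T_L$ (multiply $A_L$ and $b_L$ by $c$), leaving the sparsity pattern and the depth unchanged; precomposition with the affine map $D_b^a$ is absorbed into the first affine map $T_1$ (replace $A_1 x + b_1$ by $A_1(ax-b)+b_1 = (aA_1)x + (b_1 - A_1 b)$), which again preserves both the $\ell^0$-norm of every $T_l$ and the depth. Next, \ref{it:nn4} (composition): one simply concatenates the two tuples, identifying the output layer of $\Phi_1$ with the input layer of $\Phi_2$; since $\rlz(\Phi_1) = T^{(1)}_{L_1}\circ\cdots$ ends in an affine map and $\rlz(\Phi_2)$ begins with $\bs\sigma^{(2)}_1\circ T^{(2)}_1$, the two affine maps $T^{(2)}_1$ and $T^{(1)}_{L_1}$ compose into a single affine map, so no weights are lost (hence the clean bound $n_1 + n_2$) and the depth is exactly the sum. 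Then \ref{it:nn3} (parallelization): after padding all $\Phi_i$ to the common depth $\ell$, I would place them "side by side," taking the block-diagonal of the affine maps at each layer; the weight count is the sum of the padded weight counts, and the depth is $\ell$. Finally \ref{it:nn2} (summation of networks with the same input and output dimension) follows from \ref{it:nn3} by first parallelizing to produce $(\rlz(\Phi_1),\ldots,\rlz(\Phi_N))\in\R^{Nd_2}$ and then applying one last summation matrix $\R^{Nd_2}\to\R^{d_2}$; since the parallelized network already ends in an affine map, this summation is absorbed into it at no extra cost, giving the stated $n = C + \sum_i n_i$.

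Throughout, the depth bookkeeping is the routine-but-careful part: for \ref{it:nn2} and \ref{it:nn3} the resulting depth is $\max_i\depth(\Phi_i)$ because padding raises every shorter network exactly to that value; for \ref{it:nn4} it is the sum; for \ref{it:nn1} and \ref{it:nn5} it is unchanged. The main obstacle — really the only nontrivial point — is the identity-padding construction and getting its cost to match the constant $C = \min\{d_1,d_2\}(\max_i\depth-\min_i\depth)$ exactly: one must pad on the *narrower* side of the network (input side if $d_1 \le d_2$, output side otherwise) so that each inserted identity layer acts on only $\min\{d_1,d_2\}$ coordinates, and one must check that inserting identity layers on the output side still composes correctly with the subsequent affine map. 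Everything else is a matter of writing down block matrices and adding up $\ell^0$-norms.
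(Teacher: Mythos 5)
First, a remark on the comparison itself: the paper offers no proof of this theorem --- it is imported wholesale from \cite{gribonval:hal-02117139} --- so your construction can only be judged on its own terms. Your overall plan (explicit block constructions, identity padding, absorbing affine maps into neighbouring layers) is the standard and essentially correct architecture, and parts (i) and (v) are fine as written. But two of your steps do not deliver the stated quantitative bounds. For part (iv) you propose to merge the output affine map $T^{(1)}_{L_1}$ of $\Phi_1$ with the input affine map $T^{(2)}_1$ of $\Phi_2$ into a single affine map and claim ``no weights are lost.'' The $\ell^0$-count of a product of sparse matrices is not controlled by the sum of the individual counts: take $d_2=1$, $A^{(1)}_{L_1}\in\R^{1\times m}$ and $A^{(2)}_1\in\R^{k\times 1}$ both full; the product has $km$ nonzeros while $\norm{A^{(1)}_{L_1}}[\ell^0]+\norm{A^{(2)}_1}[\ell^0]=m+k$, so the merged network can far exceed the budget $n_1+n_2$ (merging also removes a layer, which is inconsistent with your own claim that the depth is exactly the sum). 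The construction that matches the statement exploits the \emph{non-strict} architecture of \Cref{sec:nn}: keep both affine maps and insert the identity activation $\id_\R$ between them, so that $W(\Phi)=n_1+n_2$ exactly and the depths add.

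Second, in parts (ii) and (iii) your padding of each $\Phi_i$ individually up to the common depth $\ell:=\max_j\depth(\Phi_j)$ costs $\min\{d_1,d_2\}(\ell-\depth(\Phi_i))$ \emph{per network}, i.e.\ up to $N\min\{d_1,d_2\}(\max_i\depth(\Phi_i)-\min_i\depth(\Phi_i))$ in total --- a factor $N$ larger than the stated $C$. To obtain $C$ one needs a single \emph{shared} synchronization channel rather than $N$ separate ones: either one identity-forwarding chain of width $d_1$ that delays the common input so that $\Phi_i$ begins at time $\ell-\depth(\Phi_i)$ and all networks finish together, or one accumulator chain of width $d_2$ that carries forward (and, in (ii), sums) the outputs as the networks finish; either chain costs $\min\{d_1,d_2\}$ weights per layer over $\max_i\depth(\Phi_i)-\min_i\depth(\Phi_i)$ layers, which is exactly $C$. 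Relatedly, your identity gadget $\id_\R(t)=\rho_r(t)-\rho_r(-t)$ is valid only for $r=1$ (for odd $r\geq 3$ it yields $t^r$), and it is unnecessary anyway: since the networks are non-strict, an identity layer uses the activation $\id_\R$ and costs exactly one weight per coordinate. Your derivation of (ii) from (iii) by absorbing the final summation matrix into the last block-diagonal affine map is unproblematic, since a row of identity blocks times a block-diagonal matrix has precisely the summed sparsity; only the padding accounting above needs to be repaired.
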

\section{Besov Spaces and Wavelet Systems}\label{sec:wavs}
In this section, we recall some classical results on
(isotropic) Besov spaces and
their characterization with wavelets. As in \Cref{sec:prelim},
we focus mostly on results relevant to our work.
For more details we refer to, e.g., \cite{Cohen2003}.

\subsection{Besov Spaces}\label{sec:besov}
Let $\Omega\subset\R^d$, $h\in\R^d$ \alert{and $\tau_h$ a} translation operator\\
$(\tau_h f)(x):=f(x+h)$, $\id:\R^d\rightarrow\R^d$ the identity operator
and define the \emph{$m$-th difference}
\begin{align*}
    \Delta_h^m:=(\tau_h-\id)^m:=
    \underbrace{(\tau_h-\id)\circ\ldots\circ(\tau_h-\id)}_{m\text{ times}},\quad m\in\N.
\end{align*}  
We use the notation
\begin{align*}
    \Delta_h^m (f, x, \Omega):=
    \begin{cases}
        (\Delta_h^m f)(x),&\quad\text{if } x, x+h,\ldots,x+rh\in\Omega,\\
        0,&\quad\text{otherwise}.
    \end{cases}
\end{align*}
The \emph{modulus of smoothness} of order $m$ is defined
for any $t>0$ as
\begin{align*}
    \omega_m(f, t, \Omega)_{\alert{X}}=
    \sup_{|h|\leq t}\norm{\Delta_h^m(f, \cdot, \Omega)}[\alert{X}],
\end{align*}
where $|h|$ denotes the standard Euclidean $2$-norm.
Finally, the Besov \alert{quasi-}semi-norm is defined for any $0<\alert{q}\leq\infty$,
any $\alpha>0$ and $m:=\lfloor \alpha\rfloor +1$ by
\begin{align*}
    \snorm{f}[B^\alpha_q(\alert{X})]:=
    \begin{cases}
        \left(\int_0^1[t^{-\alpha}\omega_m(f, t, \Omega)_{\alert{X}}]^q\d t/t\right)^{1/q},&
        \quad 0<q<\infty,\\
        \sup_{t>0}t^{-\alpha}\omega_m(f, t, \Omega)_{\alert{X}},&\quad q=\infty.
    \end{cases}
\end{align*}
Then, the (isotropic) Besov space is defined as
\begin{align*}
    B^\alpha_q(\alert{X}):=\left\{
    f\in \alert{X}:\; \snorm{f}[B^\alpha_q(\alert{X})]<\infty\right\}.
\end{align*}
and it is a (quasi-)Banach space \alert{that we equip with the (quasi-)norm
\begin{alignat*}{2}
    \norm{f}[B^\alpha_q(\alert{X})]&:=\norm{f}[\alert{X}]+
    \snorm{f}[B^\alpha_q(\alert{X})],\quad &&X=L^p(\Omega),\\
    \norm{f}[B^\alpha_q(\alert{X})]&:=\snorm{f}[B^\alpha_q(\alert{X})],
    \quad &&X=H_p(\R^d),
\end{alignat*}
where $H_p(\R^d)$ is the real Hardy space.}

The parameter $\alpha>0$ is the smoothness order, while the
\alert{space $X$ reflects the measure of said smoothness.}
The secondary parameter $q$ is less important and merely provides
a finer gradation of smoothness.
\alert{For $X=L^p(\Omega)$, a} few relationships are rather
\alertt{straightforward}
\begin{alignat*}{2}
    B^{\alpha_1}_q(L^p(\Omega))&\hookrightarrow
    B^{\alpha_2}_q(L^p(\Omega)),&&\quad \alpha_1\geq\alpha_2,\\
    B^{\alpha}_q(L^{p_1}(\Omega))&\hookrightarrow
    B^{\alpha}_q(L^{p_2}(\Omega)),&&\quad p_1\geq p_2,\\
    B^{\alpha}_{q_1}(L^p(\Omega))&\hookrightarrow
    B^{\alpha}_{q_2}(L^p(\Omega)),&&\quad q_1\leq q_2,
\end{alignat*}
where $\hookrightarrow$ denotes a continuous embedding.
For non-integer $\alpha>0$ and $1\leq p\leq\infty$,
$B^{\alpha}_p(L^p(\Omega))$ is the fractional Sobolev space
$W^{\alpha,p}(\Omega)$. For integer $\alpha>0$,
the Besov space $B^{\alpha}_\infty(L^p(\Omega))$
is slightly larger than $W^{\alpha,p}(\Omega)$.
For $p=q=2$, the Besov space $B^{\alpha}_2(L^2(\Omega))$
is the same as the Sobolev space $W^{\alpha,2}(\Omega)$.

\alert{
The Besov spaces
$B_\tau^\alpha(L^\tau(\Omega))$, $1/\tau=\alpha/d+1/p$}
are on
the \emph{critical embedding line} (see \Cref{fig:DeVore}).
Spaces above this line are embedded in $L^p$,
spaces on this line may or may not be embedded in $L^p$,
and spaces below this line are never embedded in $L^p$.
In this sense, such Besov spaces are quite large as the functions
on this line barely have enough regularity to be members of
$L^p$. It is well-known that optimal approximation
\alert{-- in the sense of continuous nonlinear widths
(see also beginning of \Cref{sec:intro}) --}
of functions from such spaces with
a continuous parameter selection
can only be achieved by nonlinear methods,
see \cite{devore1989optimal}.
It is the main result of this work that
RePU networks achieve optimal approximation for these spaces,
while ReLU networks achieve near to optimal approximation.

To transfer results from $\R^d$ to \alert{more general domains},
we will use the common technique of extension operators.
\begin{theorem}[Extension Operator \cite{DeVore1984, devore1993besov}]\label{thm:extension}
    \alert{
    Let $\alpha>0$, $0<p,q\leq\infty$
    and let $\Omega\subset\R^d$ be an $(\varepsilon,\delta)$-domain
    for $0<p\leq 1$ and a strong locally Lipschitz domain for
    $1<p\leq\infty$.
    Then,}
    there exists an \alert{extension} operator
    $\mc E:B^\alpha_q(L^p(\Omega))\rightarrow
    B^\alpha_q(L^p(\R^d))$ such that
    \alert{$\mc Ef|_{\Omega}=f$ and}
    \begin{align*}
        \norm{f}[B^\alpha_q(L^p(\Omega))]\leq
        \norm{\mc Ef}[B^\alpha_q(L^p(\R^d))]\leq C
        \norm{f}[B^\alpha_q(L^p(\Omega))],
    \end{align*}
    where $C$ depends only on $d$, $\alpha$, $p$ and
    the domain $\Omega$.
\end{theorem}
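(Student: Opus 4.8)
Since this is a classical extension result, one legitimate route is simply to invoke \cite{Stein70} for $1<p\le\infty$ and \cite{Jones1981,DeVore1984,devore1993besov} for the remaining range; below I sketch the self-contained plan. Observe first that the left-hand inequality $\norm{f}[B^\alpha_q(L^p(\Omega))]\le\norm{\mc Ef}[B^\alpha_q(L^p(\R^d))]$ is automatic once $\mc Ef|_\Omega=f$: restricting to $\Omega$ cannot increase the $L^p$ norm, and every admissible difference $\Delta_h^m(f,\cdot,\Omega)$ appears among the $\Delta_h^m(\mc Ef,\cdot,\R^d)$, so $\omega_m(f,t,\Omega)_{L^p}\le\omega_m(\mc Ef,t,\R^d)_{L^p}$. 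Hence the whole content is the construction of $\mc E$ together with the reverse bound $\norm{\mc Ef}[B^\alpha_q(L^p(\R^d))]\lesssim\norm{f}[B^\alpha_q(L^p(\Omega))]$, and I would treat the two parameter ranges by different means.

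\emph{Case $1<p\le\infty$ (minimally smooth $\Omega$).} Here I would use Stein's universal extension operator. Via the locally finite cover $\{U_i\}$ of \Cref{def:Lipschitz} and a subordinate partition of unity, the problem localizes to extending across a single Lipschitz graph, i.e.\ from a special Lipschitz domain (\Cref{def:speclip}), and after straightening the graph, across the half-space. Stein's reflection operator---built from a fixed kernel supported in $\lambda\in[1,\infty)$ with all moments vanishing except the zeroth---is bounded from $W^{k,p}$ of the half-space into $W^{k,p}(\R^d)$ simultaneously for all $k\in\N_0$ and all $1\le p\le\infty$. Since for integers $k_0<\alpha<k_1$ one has $B^\alpha_q(L^p)=(W^{k_0,p},W^{k_1,p})_{\theta,q}$ with $\theta=(\alpha-k_0)/(k_1-k_0)$ as a real interpolation space, boundedness of $\mc E$ on $B^\alpha_q(L^p)$ follows by interpolating the two Sobolev endpoint estimates (the case $p=\infty$ being identical with $L^\infty$-based Sobolev spaces).

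\emph{Case $0<p\le1$ (and, robustly, all $0<p\le\infty$) on an $(\varepsilon,\delta)$-domain.} Now interpolation through Sobolev spaces is unavailable, and I would run the Jones / DeVore--Sharpley Whitney-reflection construction directly on the modulus of smoothness. Take a Whitney decomposition $\{Q_j\}$ of $\R^d\setminus\overline\Omega$ with $\operatorname{diam}Q_j\sim\dist(Q_j,\partial\Omega)$, a smooth partition of unity $\{\varphi_j\}$ subordinate to mildly dilated cubes $Q_j^*$, and, for each $j$ with $\operatorname{diam}Q_j\le\delta/C$, a reflected cube $Q_j^\sharp\subset\Omega$ with $\operatorname{diam}Q_j^\sharp\sim\operatorname{diam}Q_j\sim\dist(Q_j^\sharp,Q_j)$; the existence of such $Q_j^\sharp$ is exactly what the $(\varepsilon,\delta)$-condition provides. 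With $m=\lfloor\alpha\rfloor+1$, let $P_j$ be a near-best $L^p$-approximation to $f$ on $Q_j^\sharp$ by polynomials of degree $<m$, and set
\[
    \mc Ef:=
    \begin{cases}
        f,& \text{on }\Omega,\\
        \sum_j\varphi_j P_j,& \text{on }\R^d\setminus\overline\Omega,
    \end{cases}
\]
with $\varphi_j$ set to $0$ for the cubes too far from $\partial\Omega$ (finitely many when $\Omega$ is bounded; for unbounded $\Omega$ one multiplies the outside part by a slowly varying cutoff, which does not change the Besov seminorm). Continuity of $\mc Ef$ across $\partial\Omega$ and $\norm{\mc Ef}[L^p(\R^d)]\lesssim\norm{f}[L^p(\Omega)]$ follow from the Whitney geometry together with Whitney/Bramble--Hilbert local approximation estimates, which remain valid for $p\le1$.

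It remains to estimate $\snorm{\mc Ef}[B^\alpha_q(L^p(\R^d))]$, and this is the crux. For fixed $t>0$ and $|h|\le t$, I would decompose $\int_{\R^d}|\Delta_h^m(\mc Ef,x,\R^d)|^p\,dx$ according to whether the segment $[x,x+mh]$ stays in $\Omega$, stays in $\R^d\setminus\overline\Omega$, or meets $\partial\Omega$; the first part is bounded by $[\omega_m(f,t,\Omega)_{L^p}]^p$ at once. For the other two, one estimates $\Delta_h^m(\varphi_jP_j)$ on a Whitney cube by telescoping over the $\mathcal O(1)$ neighbouring cubes and over a chain of reflected cubes joining $Q_j^\sharp$ to $Q_k^\sharp$, bounding the jumps $\norm{P_j-P_k}[L^\infty(Q_j^*)]$---via inverse (Markov-type) inequalities for polynomials on cubes---by a $\dist(Q_j^\sharp,Q_k^\sharp)$-weighted sum of local best approximation errors of $f$ by polynomials of degree $<m$ over cubes in $\Omega$ along the chain. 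The chain, and the fact that all its cubes lie in $\Omega$ with comparable sizes, is produced by routing along the rectifiable path furnished by the $(\varepsilon,\delta)$-condition, which keeps every point $z$ on the path at distance $\ge\varepsilon\min\{\|z-x\|_2,\|z-y\|_2\}$ from $\partial\Omega$, exactly as in \Cref{def:epsdelta}. Summing these local errors and invoking the discrete Hardy inequality together with the characterization of $\snorm{\cdot}[B^\alpha_q(L^p(\R^d))]$ by local polynomial approximation errors on dyadic cubes yields the desired bound. The main obstacle is precisely this boundary bookkeeping: controlling jumps between polynomial pieces on Whitney cubes that are close in $\R^d$ but whose reflected cubes can be far apart measured along $\partial\Omega$, and converting the path-length control of the $(\varepsilon,\delta)$-definition into a geometrically summable series of local approximation errors of $f$. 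The Stein/Lipschitz part, the $L^p$ bound, and continuity across $\partial\Omega$ are, by comparison, routine.
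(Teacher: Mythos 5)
The paper offers no proof of this statement: it is imported verbatim from the cited literature (DeVore--Sharpley for $(\varepsilon,\delta)$-domains and $0<p\le1$, Stein/DeVore--Popov for Lipschitz domains and $p>1$), which is exactly the route you name in your first sentence, so your treatment matches the paper's. Your accompanying sketch -- the trivial restriction inequality, Stein's reflection plus real interpolation of Sobolev pairs for $1<p\le\infty$, and the Whitney/reflected-cube/chaining construction for $0<p\le1$ -- is a faithful outline of the actual proofs in those references, with the genuinely hard step (the boundary chaining estimate) correctly identified rather than glossed over.
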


We conclude by noting that Besov spaces combine well with
interpolation. To be precise, we briefly define
interpolation spaces via the $K$-functional.
Let $X$ be a quasi-normed space
and $Y$ be a quasi-semi-normed space
with $Y\hookrightarrow X$.
The $K$-functional is defined for any $f\in X$ by
\begin{align*}
    K(f, t, X, Y):=\inf_{f=f_0+f_1}\{\norm{f_0}[X]+t\snorm{f_1}[Y]\},\quad t>0.
\end{align*}
For $0<\theta<1$ and $0<q\leq\infty$, define the quantity
\begin{align*}
    \snorm{f}[(X,Y)_{\theta,q}]:=
    \begin{cases}
        \left(\int_0^\infty
        [t^{-\theta}K(f,t,X,Y)]^q\d t/t\right)^{1/q},&\quad 0<q<\infty,\\
        \sup_{t>0}t^{-\theta} K(f,t,X,Y),&\quad q=\infty.
    \end{cases}
\end{align*}
Then, the spaces
\begin{align*}
    (X,Y)_{\theta,q}:=\left\{
    f\in X:\snorm{f}[(X,Y)_{\theta,q}]<\infty\right\},
\end{align*}
equipped with the (quasi-)norm
\begin{align*}
    \norm{f}[(X,Y)_{\theta,q}]:=\norm{f}[X]+\snorm{f}[(X,Y)_{\theta,q}],
\end{align*}
are \emph{interpolation spaces}.

Besov spaces provide a relatively complete description
of interpolation spaces in the following sense:
for $0<\theta<1$
\begin{alignat*}{2}
    (L^p(\Omega), W^\alpha(L^p(\Omega)))_{\theta,q}&=
    B^{\theta\alpha}_q(L^p(\Omega)),&&\quad
    1\leq p\leq\infty,\; 0<q\leq\infty,\\
    (B^{\alpha_1}_{q_1}(L^p(\Omega)),
    B^{\alpha_2}_{q_2}(L^p(\Omega)))_{\theta,q}&=
    B^{\alpha}_q(L^p(\Omega)),&&\quad
    0<\alpha_1<\alpha_2,\; \alpha:=(1-\theta)\alpha_1+\theta\alpha_2,\\
    & &&\quad 0<p,q,q_1,q_2\leq\infty,\\
    (L^p(\Omega),
    B^{\alpha}_{q_1}(L^p(\Omega)))_{\theta,q}&=
    B^{\theta\alpha}_q(L^p(\Omega)),&&\quad
    0<p,q,q_1\leq\infty.
\end{alignat*}
For Besov spaces on the critical line with $1/\tau=\alpha/d+1/p$,
we obtain
\begin{align*}
    (L^p(\Omega),
    B^{\alpha}_{\tau}(L^\tau(\Omega)))_{\theta,q}&=
    B^{\theta\alpha}_q(L^q(\Omega)),\quad
    \text{if}\quad 1/q=\theta\alpha/d+1/p.
\end{align*}

\subsection{Wavelets}
There are many possible wavelets constructions satisfying different properties
depending on the intended application.
Said constructions can be rather technical, with the payoff being
various favorable analytical and numerical features.
We do not intend to cover this topic in-depth and once again only pick out
the aspects required for this work.
We proceed by briefly reviewing one-dimensional wavelets constructions,
after which we turn to wavelets on $\R^d$.
Our presentation is somewhat abstract and therefore flexible,
but we will also be more specific with some aspects
of the construction that we require in \Cref{sec:optimal}.
For more details on the
subject we refer to \cite{Cohen2003}.

The starting point of a wavelet construction is typically a
multi-resolution analysis (MRA), i.e., a sequence of closed
subspaces $V_j\subset V_{j+1}$ of $L^2(\R)$ that
are nested, dilation- and shift-invariant, dense in $L^2(\R)$
and are all generated by a single\footnote{Multiple
scaling functions are possible as well in which
case such functions are referred to
as \emph{multi-wavelets}, see \cite{donovan}.} \emph{scaling function}
$\varphi\in V_0$. To be more precise,
we assume the system
$\{\varphi(\cdot-k):\;k\in\Z\}$
is a Riesz basis of $V_0$ and therefore
$\{\varphi(2^j\cdot-k):\;k\in\Z\}$ is a Riesz basis of $V_j$.
We use the shorthand notation
\begin{align}\label{eq:defdil}
    \varphi_{j,k}:=2^{j/2}\varphi(2^j\cdot-k),
\end{align}
where the pre-factor $2^{j/2}$ normalizes $\varphi$ in $L^2$.
Later we will redefine this to $2^{j/p}$ for normalization in
$L^p$ \alert{or $H_p$} for any $0<p\leq\infty$, with
the convention $2^{j/\infty}=1$.
\alert{Here $H_p$ denotes the real Hardy space which coincides with
$L^p$ for $p>1$, see \cite{Fefferman1972} and \Cref{rem:hardy}.}

Defining a projection $P_j:L^2(\R)\rightarrow V_j$ is rather simple if $\{\varphi(\cdot-k):\;k\in\Z\}$ forms an orthogonal basis of $V_0 $. Indeed, this   property implies that $\{\varphi_{j,k}:\;k\in\Z\}$ forms an orthogonal basis of $V_j$, and $P_j$ can be chosen to be the orthogonal projection.
However, for numerical reasons, it is sometimes unpractical
to construct scaling functions $\varphi$ such that $\{\varphi(\cdot-k):\;k\in\Z\}$ forms an orthogonal basis of $V_0 $ and without this property a constructive definition of $P_j$ is not \alertt{straightforward}.

A way-out are so-called \emph{bi-orthogonal} constructions.
A function $\tilde\varphi\in L^2(\R)$ is dual to $\varphi$ if it satisfies
\begin{align*}
    \langle\varphi(\cdot-k),\tilde\varphi(\cdot-l)\rangle_{L^2}
    =\delta_{k,l},\quad k,l\in\Z,
\end{align*}
where $\delta_{k,l}$ is the Kronecker delta.
We then define the \emph{oblique} projection $P_j$
\begin{align*}\label{eq:oblique}
    P_j f:=\sum_{k\in\Z}\langle f,\tilde\varphi_{j,k}\rangle_{L^2}
    \varphi_{j,k}.
\end{align*}

A representation of a function in $V_j$ is typically referred to
as a \emph{single-scale} representation. To switch to a
\emph{multi-scale} representation,
we need to characterize the so-called \emph{detail spaces}
defined through the projections
\begin{align*}
    Q_j:=P_{j+1}-P_j,
\end{align*}
with the detail spaces defined as $W_j:=Q_j(L^2(\R))$.
This is achieved by constructing a \emph{wavelet} $\psi\in V_1$
\begin{align*}
    \psi:=\sum_{k\in\Z}g_k\varphi(2\cdot-k),
\end{align*}
for some coefficients $g_k\in\R$ such that
\begin{align*}\label{eq:Npsi}
    N_\psi:=\#\{k:g_k\neq 0\}<\infty.
\end{align*}

Any function $f\in L^2(\R)$ can then be decomposed into
a sequence of \alert{detail coefficients}
\begin{align}\label{eq:decompinit}
    f=\alert{\sum_{j\in\Z}\sum_{k\in\Z}c_{j,k}\psi_{j,k},}
\end{align}
\alert{where $\psi_{j,k}$ is defined as in
\eqref{eq:defdil}.}
To simplify notation, one typically \alert{introduces
the index set $\nabla:=\Z\times\Z$.}
Decomposition \eqref{eq:decompinit} then simplifies to
\begin{align*}
    f=\sum_{\lambda\in\nabla}c_\lambda\psi_\lambda.
\end{align*}

In order for the wavelets $\psi_\lambda$ to characterize
Besov spaces, they have to
satisfy certain assumptions.
\begin{assumption}[Characterization]
    We assume the scaling function $\varphi$ and its dual
    $\tilde\varphi$ satisfy the following properties.
    \begin{enumerate}[label=(W\arabic*)]
        \item\label{W1} (Integrability) For some $p', p''\in [1,\infty]$
        such that $1/p'+1/p''=1$, we
        assume $\varphi\in L^{p'}(\R)$
        and $\tilde\varphi\in L^{p''}(\R)$. 
        \item\label{W2} (Polynomial Reproduction)
        We assume $\varphi$ satisfies \emph{Strang-Fix} conditions
        of order $L\in\N$ or, equivalently, for any polynomial $P\in\mb P_{L-1}$
        of degree $L-1$, we have $P\in V_0$.
        \item\label{W3} (Regularity)   For some
        $s>0$ \alert{and some $0<\tau,q\leq\infty$,
        we assume $\varphi\in B^s_{q}(L^\tau(\R))$}.
    \end{enumerate}
\end{assumption}

These conditions are sufficient to ensure Besov spaces can be characterized
by the decay of the wavelet coefficients \alert{for the case $p\geq 1$
or given sufficient regularity}.
\alert{For the case $X=H_p(\R^d)$ and $0<p\leq 1$, we additionally
require $\varphi$ to satisfy}\footnote{\alert{Weaker assumptions are also possible,
see \cite{Kyriazis1996Wavelet} for details.}}

\begin{assumption}[\alert{Hardy Spaces}]
        \alert{For $0<p\leq 1$, assume $\varphi$ satisfies}
        \begin{enumerate}[label=(H\arabic*)]
        \item\label{H1} 
        \alert{$\hat{\varphi}(0)=1$, $D^\beta\hat{\varphi}(0)=0$
        for every $1\leq|\beta|<\max\{[d(1/p-1)]+1, L\}$,
        where $\hat{\varphi}$ denotes the Fourier transform
        of $\varphi$.}
        \end{enumerate}
\end{assumption}

\alert{For $N$-term approximation results in the
case $p=\infty$, we additionally require}

\begin{assumption}[\alert{$p=\infty$}]\leavevmode
    \begin{enumerate}[label=(A\arabic*)]
        \item\label{P1}    \alert{We assume $\varphi$ has compact support
                                    and is $s$ times continuously differentiable.}
    \end{enumerate}
\end{assumption}

\alert{Finally,} for our work we will require two
additional conditions that are, however, easy to satisfy
for a variety of wavelet families.

\begin{assumption}[\alertt{Piecewise} Polynomial]
    We additionally assume the scaling function $\varphi$ satisfies the following
    properties.
    \begin{enumerate}[label=(P\arabic*)]
        \item\label{A1}    We assume $\varphi$ has compact support.
        \item\label{A2}    We assume $\varphi$ is \alertt{piecewise} polynomial.
    \end{enumerate}
\end{assumption}

An example of \alert{wavelet families that can be
constructed to satisfy} all
of the assumptions
\alert{\ref{W1}--\ref{W3}, \ref{H1}, \ref{P1} and \ref{A1}--\ref{A2}}
are the CDF bi-orthogonal B-spline wavelets from \cite{cdf}.
These constructions allow to choose an arbitrary polynomial reproduction
degree $L-1$, regularity order $s$ and the resulting scaling function
$\varphi$ (and consequently $\psi$ as well) are compactly
supported splines of degree $L-1$.

Finally, we briefly describe how to extend the above wavelets
to \alert{the multivariate case}.
There are several possible approaches for this,
but we describe a specific tensor product construction suitable
for isotropic Besov spaces.

For $x\in\R^d$, we define the tensor product scaling function as
\begin{align*}\label{eq:tpscaling}
    \phi(x):=\varphi(x_1)\cdots\varphi(x_d),
\end{align*}
and in the same manner as before, but for a general $0<p\leq\infty$,
\begin{align}\label{eq:tpscaling2}
    \phi_{j,k,p}(x):=2^{dj/p}\phi(2^jx-k),\quad j\in\Z,\;k\in\Z^d,
\end{align}
with the convention $2^{dj/\infty}=1$.
Next, for $e\in\{0,1\}^d\setminus\{0\}$, we define
\begin{align}\label{eq:tpwavelet}
    \psi^e(x):=\psi^{e_1}(x_1)\cdots\psi^{e_d}(x_d),
\end{align}
with the convention $\psi^1(x_i):=\psi(x_i)$ and $\psi^0(x_i)=\varphi(x_i)$,
and $\psi^e_{j,k,p}$ is defined as in \eqref{eq:tpscaling2}.
Simplifying as before with
\begin{align*}
    \alert{\nabla:=\{(e,j,k):\;e\in\{0,1\}^d\setminus\{0\},\;j\in\Z,\;k\in\Z^d\}},
\end{align*}
we obtain the $d$-dimensional wavelet system
\begin{align}\label{eq:defwavsys}
    \Psi:=\{\psi_{\lambda,p}:\;\lambda\in\nabla\}.
\end{align}
Finally, we define the fixed level sets
\begin{align*}
    \alert{\nabla_j:=\{\lambda=(e,j,k)\in\nabla:\;|\lambda|=j\},}
\end{align*}
\alert{where we use the shorthand
notation $|\lambda|:=|(e,j,k)|:=j$.}

\begin{theorem}[Characterization \cite{Cohen2003,Kyriazis1996Wavelet}]\label{thm:char}
    Let $\varphi$ satisfy \ref{W1} for some integrability parameters
    $p',p''$, \ref{W2} for order $L$ and \ref{W3} with smoothness
    order $s$ for primary parameter $0<p\leq p'$ and any secondary parameter
    $0<q\leq\infty$.
    
    \alert{For $\alpha<\min\{s,L\}$, assume additionally either}
    \begin{enumerate}[label=(\roman*)]
        \item\label{it:suffreg} \alert{$X=L^p(\R^d)$, $0<p\leq\infty$ and
        $\alpha>d(1/p-1/p')$},
        \item \alert{or $X=H_p(\R^d)$, $0<p\leq 1$
        and \ref{H1}.}
        \item \alertt{or $X=L^p(\R^d)$, $1\leq p\leq \infty$,
        $\varphi$ bounded and compactly supported.}   
    \end{enumerate}

    Then, if $f=\sum_{\lambda\in\nabla}c_{\lambda,p}\psi_{\lambda,p}$
    is the wavelet decomposition of $f$,  
    we have the norm equivalence
    \begin{align*}
        \snorm{f}[B^\alpha_q(\alert{X})]\sim
        \begin{cases} 
            \left(\sum_{\alert{j\in\Z}}2^{j\alpha q}
            \left(\sum_{\lambda\in\nabla_j}
            |c_{\lambda,p}|^p\right)^{q/p}\right)^{1/q},
            &\quad 0<q<\infty,\\
            \sup_{\alert{j\in\Z}}2^{j\alpha}
            \left(\sum_{\lambda\in\nabla_j}
            |c_{\lambda,p}|^p\right)^{1/p},
            &\quad q=\infty.
        \end{cases}
    \end{align*}
\end{theorem}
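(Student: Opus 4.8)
The plan is to reduce the statement to the classical Jackson--Bernstein machinery for the multiresolution spaces $V_j$ generated by $\varphi$, so that the role of the hypotheses \ref{W1}--\ref{W3} and \ref{H1} is isolated in a small number of estimates. Throughout, write $P_j$ and $Q_j=P_{j+1}-P_j$ for the (oblique) projectors, so that $Q_jf=\sum_{\lambda\in\nabla_j}c_{\lambda,p}\psi_{\lambda,p}$ by the very definition of the detail coefficients.

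First I would establish the \emph{level-wise stability}: for each fixed $j\in\Z$,
\[
  \norm{\sum_{\lambda\in\nabla_j}c_\lambda\psi_{\lambda,p}}[X]\sim
  \Big(\sum_{\lambda\in\nabla_j}|c_\lambda|^p\Big)^{1/p},
\]
with the usual modification for $p=\infty$. The upper bound follows from the (quasi-)triangle inequality together with the compact support of $\varphi$ (hence of the $\psi^e$, by \ref{A1}), which forces only a uniformly bounded number of the $\psi_{\lambda,p}$ to overlap at any point of a fixed scale, and with $\norm{\psi_{\lambda,p}}[X]\sim1$ after $L^p$-normalization by \ref{W1}. The lower bound uses the Riesz-basis/bi-orthogonality structure of $\{\varphi(\cdot-k)\}_k$ and the localized duals $\tilde\varphi$ from \ref{W1}. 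This is the step that consumes the three alternative hypotheses: in case \ref{it:suffreg} the constraint $\alpha>d(1/p-1/p')$ is exactly what makes the wavelet series converge in $L^p$; in the case $X=H_p$, $0<p\le1$, one replaces the missing $L^p$-boundedness of the analysis operator by the cancellation conditions \ref{H1} on $\hat\varphi$ and argues in the Hardy space; and in the third case boundedness and compact support of $\varphi$ give the estimate directly.

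Next I would prove the \emph{direct (Jackson) estimate} $\norm{f-P_jf}[X]\lesssim\omega_m(f,2^{-j},\Omega)_X$ with $m=\lfloor\alpha\rfloor+1\le L$: on each dyadic cube of side $2^{-j}$ subtract a local best polynomial of degree $<L$, invoke the polynomial reproduction \ref{W2} so that $P_j$ fixes it, bound the local remainder by the local modulus of smoothness through a Whitney-type inequality, and reassemble the local errors in $\ell^p$. Symmetrically, the \emph{inverse (Bernstein) estimate} $\snorm{g}[B^\alpha_q(X)]\lesssim2^{j\alpha}\norm{g}[X]$ for $g\in V_j$ and $\alpha<\min\{s,L\}$ follows from the regularity $\varphi\in B^s_q(L^\tau)$ in \ref{W3} and the stability above. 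Combining the Jackson and Bernstein bounds via the discrete Hardy inequalities (a standard argument for approximation spaces, see \cite{Cohen2003}) yields
\[
  \snorm{f}[B^\alpha_q(X)]\sim\Big(\sum_{j\in\Z}\big(2^{j\alpha}\norm{Q_jf}[X]\big)^q\Big)^{1/q},
\]
and applying the level-wise equivalence to each $Q_jf$ turns $\norm{Q_jf}[X]$ into $\big(\sum_{\lambda\in\nabla_j}|c_{\lambda,p}|^p\big)^{1/p}$, which is the asserted identity.

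The hard part will be the level-wise stability in the boundary regimes $p\le1$ and $p=\infty$, where one cannot argue purely in $L^p$: for $p\le1$ one works in $H_p(\R^d)$ and exploits the vanishing-moment conditions \ref{H1}, and for $p=\infty$ one leans on continuity and compact support of $\varphi$; keeping the \emph{oblique}, bi-orthogonal (rather than orthogonal) projector $P_j$ uniformly bounded in $j$, so that the Jackson and Bernstein steps survive unchanged, is where most of the care goes. The remaining pieces are classical, and we follow \cite{Cohen2003,Kyriazis1996Wavelet}.
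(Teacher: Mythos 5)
The paper does not prove this theorem: it is quoted as a known result with a pointer to \cite{Cohen2003,Kyriazis1996Wavelet}, so there is no in-paper argument to compare against. Your sketch is the standard Jackson--Bernstein route by which those references establish the characterization (level-wise $\ell^p$-stability of the normalized wavelets, a Whitney/polynomial-reproduction Jackson estimate, a Bernstein estimate from the regularity of $\varphi$, and discrete Hardy inequalities), and it is essentially correct as a plan. One small imprecision: the hypothesis $\alpha>d(1/p-1/p')$ in case (i) is not about convergence of the synthesis series but about making the analysis functionals $\langle f,\tilde\psi_{\lambda}\rangle$ well defined and bounded on the Besov class when $p<p'$ (via the embedding into $L^{p'}_{\mathrm{loc}}$), since \ref{W1} only places the duals in $L^{p''}$; with that reading, the place where the hypothesis is consumed is the one you identified.
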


\alertt{
Note the renormalization relationship
\begin{align}\label{eq:renorm}
	c_{\lambda,p}=2^{-|\lambda|d(1/p-1/q)}c_{\lambda,q},\quad
	0<p,q\leq\infty
\end{align}
and in particular
$c_{\lambda,p}=2^{-|\lambda|d(1/p-1/2)}c_{\lambda,2}$
with $c_{\lambda,2}=\langle f,\tilde\psi_{\lambda,2}\rangle_{L^2}$,
the inner product with the $L^2$-scaled dual wavelet
$\tilde\psi_{\lambda,2}$.
}

The above characterization implies \alert{the following}
approximation rates for best $N$-term wavelet approximations.
\begin{theorem}[$N$-term Approximation \cite{Cohen2003,Kyriazis1996Wavelet,devorec}]\label{thm:Nterm}
    Let $\alert{0}<p\alert{\leq}\infty$ and let $\Psi$ be a wavelet system satisfying
    \alert{\ref{W1}--\ref{W3} with regularity $s$ for Besov primary parameter
    $\tau>0$, reproduction order $L$,
    \alertt{$0<\alpha<\min\{s,L\}$ and
    assume either}}
    \begin{enumerate}[label=(\roman*)]
        \item \alert{$X=L^p(\R^d)$, $1<p<\infty$}
        \alertt{and $d(1/p-1/p')<\alpha$},
        \item \alertt{or $X=L^p(\R^d)$, $1<p<\infty$,
        $\varphi$ is bounded and compactly supported},
        \item \alert{or $X=L^\infty(\R^d)$, \ref{P1}
        and $d\leq \alpha\leq\min\{s,L\}$},
        \item \alert{or $X=H_p(\R^d)$, $0<p\leq 1$
        and \ref{H1}.}   
    \end{enumerate}
    
    Define the set of $N$-term wavelet expansions as
    \begin{align*}
        \mc W_N:=\left\{\sum_{\lambda\in\Lambda}c_{\lambda,p}
        \psi_{\lambda,p}:\;\Lambda\subset\nabla,\;\#\Lambda\leq N\right\}.
    \end{align*}        
    Then, \alert{for $\alpha/d\geq 1/\tau-1/p$, $0<q\leq\tau\leq p$
    and any $f\in B^\alpha_{q}(Y^\tau)$, it holds}
    \begin{align*}
        E(f, \mc W_N)_{\alert{X}}\lesssim N^{-\alpha/d}
        \snorm{f}[\alert{B^\alpha_{q}(Y^\tau)}],
    \end{align*}
    \alert{where $Y^\tau:=H_\tau(\R^d)$ for $0<p\leq 1$,
    $Y^\tau:=L^\tau(\R^d)$ for $1< p\leq\infty$
    and for $p=\infty$ we additionally assume $f$ is continuous.}
\end{theorem}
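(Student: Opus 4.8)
The plan is to reduce the statement to a purely sequence-space estimate via the wavelet characterization, and then run a Stechkin-type (dyadic block) argument on the decreasing rearrangement of the coefficients.

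First I would apply \Cref{thm:char} with primary parameter $\tau$ — legitimate since \ref{W1}--\ref{W3} hold for $\tau$, $\alpha<\min\{s,L\}$, and in each of the four cases the corresponding extra hypothesis (\ref{it:suffreg} at level $\tau$, compact support of $\varphi$, or \ref{H1}) is exactly what makes $B^\alpha_q(Y^\tau)$ characterizable — to obtain, for the wavelet coefficients $c_{\lambda,\tau}$ of $f$,
\[
    \snorm{f}[B^\alpha_q(Y^\tau)]\;\sim\;\Big(\sum_{j\in\Z}2^{j\alpha q}\Big(\sum_{\lambda\in\nabla_j}|c_{\lambda,\tau}|^\tau\Big)^{q/\tau}\Big)^{1/q}.
\]
Setting $1/\bar\tau:=\alpha/d+1/p$, the hypothesis $\alpha/d\ge 1/\tau-1/p$ and $\tau\le p$ give $\bar\tau\le\tau\le p$ with $\bar\tau<p$. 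The crux is then the sequence embedding $\big(\sum_{\lambda\in\nabla}|c_{\lambda,p}|^{\bar\tau}\big)^{1/\bar\tau}\lesssim\snorm{f}[B^\alpha_q(Y^\tau)]$. On the critical line $\bar\tau=\tau$ this is immediate: by the renormalization \eqref{eq:renorm}, $c_{\lambda,p}=2^{-|\lambda|d(1/p-1/\tau)}c_{\lambda,\tau}$ with $d(1/\tau-1/p)=\alpha$, so $\sum_\lambda|c_{\lambda,p}|^{\tau}=\sum_j2^{j\alpha\tau}\sum_{\lambda\in\nabla_j}|c_{\lambda,\tau}|^\tau$, and this is dominated by the right-hand side precisely because $q\le\tau$ (the weighted $\ell^q$-norm across levels controls the $\ell^\tau$-norm across levels). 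When $\alpha/d>1/\tau-1/p$ one additionally inserts a level-wise Hölder inequality, paying a factor $(\#\nabla_j)^{1-\bar\tau/\tau}\sim 2^{jd(1-\bar\tau/\tau)}$ — using that only finitely many wavelets per level are active, as is the case in all applications of this theorem where $f$ has compact support — and the resulting geometric series in $j$ converges because $\alpha/d-(1/\tau-1/p)>0$; so it suffices in effect to argue on the critical line.

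Next I would estimate the best $N$-term error. Let $(a_n)_{n\ge1}$ be the nonincreasing rearrangement of $(|c_{\lambda,p}|)_{\lambda\in\nabla}$, let $\Lambda_N\subset\nabla$ index the $N$ largest $|c_{\lambda,p}|$, and estimate $f-\sum_{\lambda\in\Lambda_N}c_{\lambda,p}\psi_{\lambda,p}$ by splitting the remaining indices into dyadic blocks $G_m$, $m\ge0$, corresponding to ranks $2^mN<n\le 2^{m+1}N$. Using the square-function/finite-overlap estimates for wavelet series (compact support, \ref{A1}), the $L^p$ (resp.\ $H_p$) norm of $\sum_{\lambda\in G_m}c_{\lambda,p}\psi_{\lambda,p}$ is bounded, for $1<p<\infty$, by $\lesssim(\#G_m)^{1/p}\max_{\lambda\in G_m}|c_{\lambda,p}|\lesssim(2^mN)^{1/p}a_{2^mN}$ — here the contributions of fine-scale wavelets are absorbed by their small supports — and since $2^mN\,a_{2^mN}^{\bar\tau}\le\|(a_n)\|_{\ell^{\bar\tau}}^{\bar\tau}$ this is $\lesssim(2^mN)^{1/p-1/\bar\tau}\|(a_n)\|_{\ell^{\bar\tau}}=(2^mN)^{-\alpha/d}\|(a_n)\|_{\ell^{\bar\tau}}$. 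Summing the geometric series over $m\ge0$ (subadditivity of $\|\cdot\|_{L^p}$ for $p\ge1$, resp.\ of $\|\cdot\|_{H_p}^p$ for $p\le1$, which is where \ref{H1} enters via the atomic characterization of $H_p$) yields $E(f,\mc W_N)_X\lesssim N^{-\alpha/d}\|(a_n)\|_{\ell^{\bar\tau}}\lesssim N^{-\alpha/d}\snorm{f}[B^\alpha_q(Y^\tau)]$.

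The main obstacle is the per-block transfer between $\|\cdot\|_X$ and $\ell$-norms of wavelet coefficients: for $1<p<\infty$ it rests on the Littlewood--Paley/square-function characterization of $L^p$, so that a block spanning many scales is still controlled by its cardinality; for $0<p\le1$ it is the $H_p$ atomic decomposition; and for $p=\infty$ it degrades to a bound of order (number of scales present)$\times\max|c_{\lambda,p}|\lesssim(2^mN)^{1-\alpha/d}\|(a_n)\|_{\ell^{\bar\tau}}$ (with $1/\bar\tau=\alpha/d$), which is summable in $m$ exactly under the extra hypothesis $\alpha\ge d$ (equivalently $\tau\le1$) and also requires $f$ continuous so that the wavelet series converges uniformly. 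Everything else — the characterization and the Stechkin summation — is routine once these building blocks are in place.
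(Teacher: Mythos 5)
The paper does not actually prove this theorem --- it is quoted from \cite{Cohen2003,Kyriazis1996Wavelet,devorec} --- so there is no internal proof to compare against; your argument is the standard one from those references (characterization at level $\tau$, renormalization to the $L^p$-normalized coefficients, Stechkin-type dyadic-block summation using $\norm{\sum_{\lambda\in G}c_{\lambda,p}\psi_{\lambda,p}}[L^p]\lesssim(\#G)^{1/p}\max_{\lambda\in G}|c_{\lambda,p}|$). On the critical line $1/\tau=\alpha/d+1/p$ with $q\leq\tau$ your proof is complete and correct: $\sum_\lambda|c_{\lambda,p}|^\tau=\sum_j2^{j\alpha\tau}\sum_{\lambda\in\nabla_j}|c_{\lambda,\tau}|^\tau$ is exactly the $\ell^\tau$-over-levels version of the characterization and is dominated by the $\ell^q$ version, and the rest is the usual rearrangement argument.

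The step you use to reach the strictly supercritical case $\alpha/d>1/\tau-1/p$ is, however, a genuine gap. The level-wise H\"older inequality paying $(\#\nabla_j)^{1/\bar\tau-1/\tau}\sim2^{jd(1/\bar\tau-1/\tau)}$ presupposes that only $\mathcal{O}(2^{jd})$ wavelets per level carry nonzero coefficients, i.e.\ that $f$ is essentially compactly supported; this is not among the hypotheses ($f$ is an arbitrary element of $B^\alpha_q(L^\tau(\R^d))$, and in \Cref{lemma:direct} the theorem is applied to extensions $\mc Ef$ from possibly \emph{unbounded} Lipschitz domains, including $\R^d$ itself). The step cannot be repaired, because off the critical line the claimed inequality is false on $\R^d$: take $d=1$, $p=\tau=q=2$, any admissible $\alpha>0$, and $f=\sum_{i=1}^{M}\psi_{(1,0,k_i),2}$ for $M$ distinct shifts of a single level-zero wavelet. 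All coefficients live on one level, so $\snorm{f}[B^\alpha_2(L^2(\R))]\sim M^{1/2}$ for every $\alpha<\min\{s,L\}$, while the Riesz-basis lower bound gives $E(f,\mc W_{M/2})_{L^2}\gtrsim M^{1/2}$, contradicting $E(f,\mc W_{M/2})_{L^2}\lesssim (M/2)^{-\alpha}M^{1/2}$ for large $M$. So your proof (and the theorem) only holds as stated on the critical line $\alpha/d=1/\tau-1/p$; for $\alpha/d>1/\tau-1/p$ one must additionally restrict to compactly supported $f$ or to a bounded domain, in which case your H\"older step plus the geometric summation in $j$ does go through. You should make that restriction explicit rather than relegating it to an aside about ``all applications of this theorem''.
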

\section{Optimal ReLU Approximation of Smoothness Classes}\label{sec:optimal}
With the results from \Cref{sec:prelim} and \Cref{sec:wavs} we have
all the tools necessary to derive approximation rates
for arbitrary Besov functions. As was reviewed in \Cref{sec:wavs},
Besov spaces can be characterized by the decay of the wavelet
coefficients, and $N$-term approximations achieve optimal
approximation rates for Besov functions.

In this section, we show that a RePU network
can reproduce an $N$-term wavelet expansion with $\mc O(N)$
complexity. More importantly,
we also show that a ReLU network
can approximate an $N$-term wavelet
expansion with $\mc O(N\log(\varepsilon^{-1}))$ complexity,
where $\varepsilon>0$ is the related approximation accuracy.
Together with a stability estimate,
this will imply RePU and ReLU networks can approximate
Besov functions of arbitrary smoothness order with optimal
or near to optimal rate, respectively.

\begin{lemma}[Wavelet System Complexity]\label{lemma:wavsd}
    Let $\Psi$ be a wavelet system as defined in \eqref{eq:defwavsys},
    with \alert{the}
    one-dimensional scaling function $\varphi:\R\rightarrow\R$
    satisfying \ref{A1} -- \ref{A2}. Then,
    \alert{for $X=L^p(\R^d)$, $0< p\leq\infty$},
    \begin{enumerate}[label=(\roman*)]
        \item    for $r\geq 2$, there exists a constant $C>0$
                    \alert{depending on $r$, polynomial reproduction order $L$,
                    dimension $d$, support of
                    $\varphi$, $\psi$ and
                    $\norm{\varphi}[L^\infty(\R)]$, $\norm{\psi}[L^\infty(\R)]$},
                    such that for any
                    $\psi_\lambda\in\Psi$ \alert{and any
                    $\varepsilon>0$,} there exists a RePU network
                    $\Phi_\lambda^\varepsilon\in\repu^{r,d,1}_C$ with
                    \alert{the same support as $\psi_\lambda$ such that}
                    \begin{align*}
                        \norm{\psi_\lambda-\Phi_\lambda^\varepsilon}[L^p(\R^d)]\leq\varepsilon,
                    \end{align*}
                    \alert{where the complexity of the network $\Phi_\lambda^\varepsilon$
                    is independent of $\varepsilon$,
                    the depth is at most logarithmic in $L$ and $d$.}
        \item\label{it:relu}    For $r=1$,
                    there exists a constant $C>0$
                    with dependencies as \alert{above},
                    such that
                    for any $\psi_\lambda\in\Psi$ and any $0<\varepsilon<1$,
                    there exists a ReLU network $\Phi_\lambda^\varepsilon
                    \in\relu^{d,1}_{C(1+\log(\varepsilon^{-1}))}$ with
                    \alert{the same support as $\psi_\lambda$ such that}
                    \begin{align*}
                        \norm{\psi_\lambda-\Phi_\lambda^\varepsilon}[L^p(\R^d)]\leq\varepsilon,
                    \end{align*}
                    \alert{where the depth of the network $\Phi_\lambda^\varepsilon$
                    is at most logarithmic in
                    \alertt{$\varepsilon^{-1}$},
                    \alertt{log-linear in $L$ and
                    log-linear in $d$.}}
    \end{enumerate}
\end{lemma}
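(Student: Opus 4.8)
The plan is to reduce the multivariate statement to the one-dimensional piecewise-polynomial approximation result \Cref{thm:relupoly}, using the tensor-product structure \eqref{eq:tpwavelet} of the wavelet system together with the network calculus of \Cref{thm:calc} and the multiplication network of \Cref{thm:mult}. First I would note that by \ref{A1}--\ref{A2} the one-dimensional scaling function $\varphi$ is a compactly supported piecewise polynomial, hence so is the wavelet $\psi=\sum_k g_k\varphi(2\cdot-k)$ (a finite sum by $N_\psi<\infty$); both have some fixed number of pieces and maximum degree $t=L-1$ determined only by the fixed construction. A single tensor factor $\psi^{e_i}_{j,k_i}$ is then $x_i\mapsto 2^{j/p}\psi^{e_i}(2^j x_i - k_i)$ with $\psi^{e_i}\in\{\varphi,\psi\}$, i.e.\ a dilated, shifted, scalar-multiplied copy of a fixed univariate piecewise polynomial. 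I would apply \Cref{thm:relupoly} to the mother function $\psi^{e_i}$ to obtain a univariate network, then use \Cref{thm:calc}\ref{it:nn1} (scalar multiplication by $2^{j/p}$) and \ref{it:nn5} (precomposition with the affine map $x_i\mapsto 2^j x_i - k_i$) to represent/approximate each factor at the same asymptotic cost and the same support.

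Next I would assemble the $d$ factors. Using \Cref{thm:calc}\ref{it:nn3} I parallelize the $d$ one-dimensional subnetworks (each reading its own coordinate $x_i$, which is itself a trivial affine selection) into a single network $\R^d\to\R^d$ outputting the vector of approximate factor values; then I compose with the multiplication network $\Phi_M$ of \Cref{thm:mult} via \Cref{thm:calc}\ref{it:nn4} to get a scalar output approximating $\prod_{i=1}^d \psi^{e_i}_{j,k_i}(x_i) = \psi_{\lambda,p}(x)$. For $r\ge 2$: \Cref{thm:relupoly}(i) reproduces each univariate factor to accuracy $\delta$ with $\varepsilon$-independent complexity $\mc O(N_v\log t)$ and depth $\mc O(\log t)$, and \Cref{thm:mult}(i) reproduces $M_d$ exactly with $\mc O(d)$ weights and depth $\mc O(\log d)$; summing gives $C=\mc O(d\,N_v\log t)=\mc O(d\,N_v\log L)$, independent of $\varepsilon$, and depth $\mc O(\log L + \log d)$. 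Since $M_d$ is reproduced exactly and is Lipschitz on the (compact, fixed) box containing the support of the factors, a first-order error propagation of the per-factor error $\delta$ yields total $L^\infty$ (hence $L^p$ on the bounded support, hence $L^p(\R^d)$) error $\le\varepsilon$ by choosing $\delta$ small; crucially $\delta$ affects only the constants in \Cref{thm:relupoly}(i), not the complexity. For $r=1$: I use \Cref{thm:relupoly}(ii) for each factor with per-factor accuracy $\delta\sim\varepsilon/(dK^{d-1})$, giving univariate subnetworks of size $\mc O(N_v t\log t)(1+\log\delta^{-1})$ and depth $\mc O(t\log t\log\delta^{-1})$, and \Cref{thm:mult}(ii) to approximate $M_d$ on $[-K,K]^d$ (with $K$ a fixed bound on $\|\varphi\|_\infty,\|\psi\|_\infty$) to accuracy $\varepsilon/2$ using $\mc O(d\log(dK^d/\varepsilon))$ weights and depth $\mc O(\log d\log(dK^d/\varepsilon))$. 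Adding these via \Cref{thm:calc}\ref{it:nn3}--\ref{it:nn4} and absorbing the $d$-, $L$-dependent factors into $C$ gives total complexity $C(1+\log\varepsilon^{-1})$ and depth $\mc O(\log\varepsilon^{-1})$ with the stated log-linear dependence on $L$ and $d$. Support preservation follows because the univariate networks from \Cref{thm:relupoly} have exactly the support of their mother function and tensorizing/multiplying preserves the product support.

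The main obstacle I expect is the \emph{error propagation through the product} in the ReLU case: one must control $\|\prod \hat g_i - \prod g_i\|_\infty$ in terms of the per-factor errors $\|\hat g_i - g_i\|_\infty$, which requires uniform bounds on the approximants $\hat g_i$ (not just on the targets) so that the telescoping estimate $\prod\hat g_i-\prod g_i=\sum_i(\hat g_i-g_i)\prod_{j<i}\hat g_j\prod_{j>i}g_j$ stays bounded, and one must ensure the approximate factor values remain inside the box $[-K,K]$ on which $\Phi_M^\varepsilon$ has its guaranteed accuracy (a clipping/truncation via an extra ReLU layer, still $\mc O(1)$ cost, handles this). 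A secondary bookkeeping point is tracking how the per-factor accuracy $\delta$ must scale with $d$ and $K$ so that the $\log\delta^{-1}$ terms collapse to $\log\varepsilon^{-1}$ after absorbing $d$- and $L$-dependent quantities into the constant $C$ — routine but needs care to land exactly the claimed $C(1+\log\varepsilon^{-1})$ form and the log-linear depth dependencies on $L$ and $d$.
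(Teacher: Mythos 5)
Your architecture is the same as the paper's: approximate the univariate piecewise-polynomial factors $\varphi,\psi$ via \Cref{thm:relupoly}, parallelize with \Cref{thm:calc}~\ref{it:nn3}, compose with the (exact or approximate) multiplication network of \Cref{thm:mult} via \Cref{thm:calc}~\ref{it:nn4}, and control the product error by the telescoping identity --- which you correctly single out as the crux, including the need for a uniform bound on the approximants themselves. (The paper handles that last point not by clipping but simply by enlarging the box to $K=\max\{\|\varphi\|_{L^\infty},\|\psi\|_{L^\infty}\}+\delta$, which the approximants then satisfy automatically.)

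There is, however, one step in your ordering that fails as written: you apply the $L^p$-normalization $2^{j/p}$ to each univariate factor \emph{before} the multiplication. The inputs fed to the multiplication network are then bounded only by $2^{j/p}(\|\psi\|_{L^\infty}+\delta)$, not by a fixed $K$; since $j$ ranges over all of $\Z$, no single box $[-K,K]^d$ suffices, and because the size of the ReLU multiplication network scales like $d\log(dK^d/\eta)$, taking $K\sim 2^{j/p}$ makes the weight count grow linearly in $|j|$ --- destroying the uniformity of $C$ over $\lambda$ that the lemma asserts (the telescoping bound $dK^{d-1}\delta$ inherits the same $j$-dependence, so $\delta$, and hence $\log\delta^{-1}$, would also depend on $j$). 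The paper avoids this by building the network for the \emph{unnormalized, unshifted} tensor wavelet $\psi^e$ first and only at the very end applying \Cref{thm:calc}~\ref{it:nn1} and \ref{it:nn5} to form $2^{dj/p}\,\rlz(\Phi^\varepsilon_{\psi^e})\circ D_k^{2^j}$; the combined dilation-plus-normalization leaves the $L^p$ error invariant by change of variables, so both accuracy and complexity are independent of $\lambda$. Moving your scalar multiplication from the factors to the output repairs the argument; everything else in your proposal matches the paper's proof.
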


\begin{proof}
    \alert{
    We detail the proof for the case of ReLU networks
    \ref{it:relu} as the proof for RePU networks is analogous and
    more \alertt{straightforward}.
    In the following we will frequently use the triangle inequality, i.e.,
    assuming $p\geq 1$. For $0<p<1$, $\norm{\cdot}[L^p(\R^d)]$
    is only a quasi-norm, i.e., the right-hand-side
    of the triangle inequality is to be multiplied by a constant,
    and the corresponding complexities are to be adjusted accordingly.
    }

    \alert{
    First, due to \ref{A1} -- \ref{A2}, the mother wavelet
    $\psi:\R\rightarrow\R$ is a compactly supported \alertt{piecewise} polynomial.
    By \Cref{thm:relupoly}, for any $0<\delta<1$, there exists a ReLU network
    $\Phi_\psi^\delta\in\relu^{1,1}_{C(1+\log(\delta^{-1}))}$ with the same support
    as $\psi$ such that $\|\psi-\rlz(\Phi_\psi^\delta)\|_{L^\infty(\R)}\leq\delta$.
    The depth of $\Phi_\psi^\delta$ is at most logarithmic in
    \alertt{$\delta^{-1}$} and \alertt{log-}linear
    in the polynomial degree of $\psi$.
    A similar conclusion holds for the scaling function $\varphi:\R\rightarrow\R$.
    For RePU networks, $\Phi_\psi^\delta$ has complexity
    independent of $\delta$, with depth at most logarithmic in
    \alertt{the} polynomial degree
    of $\psi$.
    }

    \alert{
    Second, recall the tensor product wavelet from \eqref{eq:tpwavelet}:
    $\psi^e(x):=\psi^{e_1}(x_1)\cdots\psi^{e_d}(x_d),$ where
    for each component either $\psi^{e_\nu}=\psi^1=\psi$
    or $\psi^{e_\nu}=\psi^0=\varphi$.
    Let $\Phi_{\psi}^\delta$, $\Phi^\delta_{\alertt{\varphi}}\in\relu^{1,1}_{C(1+\log(\delta^{-1}))}$
    be the ReLU networks as above
    \alertt{with $C=\mc O(L\log(L))$}, approximating $\psi$ and $\varphi$, respectively,
    with accuracy $\delta>0$ to be specified later.
    Then, we form the tuple
    $\Phi_\times^\delta\in\relu^{d,d}_{n_\delta}$ as in \Cref{thm:calc} \ref{it:nn3}
    \begin{align*}
        \rlz(\Phi_\times^\delta) =
        (\rlz(\Phi_{\psi^{e_1}}^\delta),\ldots,\rlz(\Phi_{\psi^{e_d}}^\delta)),
    \end{align*}
    with $n_\delta:=dC(1+\log(\delta^{-1}))$
    \alertt{and where each $\Phi_{\psi^{e_\nu}}$ has either
    the depth of $\Phi_{\varphi}$ or $\Phi_{\psi}$ --
    both of the same order as discussed above}.
    The depth of $\Phi_\times^\delta$ is then the same as the maximal depth
    between $\Phi_{\varphi}^\delta$ and $\Phi_{\psi}^\delta$ -- logarithmic
    in \alertt{$\delta^{-1}$}.
    }

    \alert{
    Next, for some different accuracy $\eta>0$,
    we construct an approximate multiplication network
    $\Phi_M^\eta\in\relu^{d,1}_{n_\eta}$ with
    $n_\eta:=Cd\log(dK^d\eta^{-1})$ as
    in \Cref{thm:mult} \ref{it:nn2}, where
    $K:=\max\{\norm{\varphi}[L^\infty(\R)],
    \norm{\psi}[L^\infty(\R)]\}+\delta$.
    This choice of $K$ is justified by
    \begin{align}\label{eq:defK}
        \norm{\rlz(\Phi^{\delta}_{\psi^{e_\nu}})}[L^\infty(\R^d)]\leq
        \delta + \norm{\psi^{e_\nu}}[L^\infty(\R)]\leq K.
    \end{align}
    Our final approximation
    $\Phi^\varepsilon_\phi\in\relu^{d,1}_{n_{\delta,\eta}}$    
    is defined by
    \begin{align}\label{eq:depthphix}
        \rlz(\Phi^\varepsilon_{\psi^e}):=\rlz(\Phi_M^\eta)\circ\rlz(\Phi_\times^\delta),
    \end{align}
    where, according to \Cref{thm:calc} \ref{it:nn4},
    $n_{\delta,\eta}=dC(1+\log(\delta^{-1}))+Cd\log(dK^d\eta^{-1})$,
    \alertt{where as above $C=\mc O(L\log(L))$.}
    The depth of $\Phi^\varepsilon_{\psi^e}$ is
	\alertt{the sum of the depths of
	$\Phi_M^\eta$ and $\Phi_\times^\delta$, i.e.,}    
    at most
    a multiple of
    $(1+\log(d)\log(dK^d\eta^{-1}) +
    \alertt{L\log(L)}\log(\delta^{-1}))$.
    }

    We estimate the resulting error from which it will be clear
    how to choose $\delta,\eta>0$ and the resulting cost $n_{\delta,\eta}$.
    We introduce the auxiliary approximation
    $\rlz(\tilde\Phi):=M_d\circ \rlz(\Phi_\times^\delta)$
    and the notation $\psi^{e_\nu}_\delta:=\rlz(\Phi_{\psi^{e_\nu}}^\delta)$.
    Then,
    \begin{align}\label{eq:totalerror}
        \norm{\psi^e-\rlz(\Phi^\varepsilon_{\psi^e})}[L^p(\R^d)]
        \leq\norm{\psi^e-\rlz(\tilde\Phi)}[L^p(\R^d)]+
        \norm{\rlz(\tilde\Phi)-\rlz(\Phi^\varepsilon_{\psi^e})}[L^p(\R^d)].
    \end{align}
    With $S:=|\supp(\varphi)\cup \supp(\psi)|$, for the second term we apply
    \Cref{thm:mult} \ref{it:nn2} and obtain
    \begin{align}\label{eq:eta}
        \norm{\rlz(\tilde\Phi)-\rlz(\Phi^\varepsilon_{\psi^e})}[L^p(\R^d)]&\leq
        \norm{\rlz(\tilde\Phi)-\rlz(\Phi^\varepsilon_{\psi^e})}[L^\infty(\R^d)]
        \left(\int_{\supp(\rlz(\tilde\Phi))\cup\supp(\rlz(\Phi^\varepsilon_{\psi^e}))}
        \right)^{1/p}\notag\\
        &\leq
        \eta S^{d/p}.    
    \end{align}
    For the first term, we can write
    \begin{align*}
        \psi^e-\rlz(\tilde\Phi)=
        &(\psi^{e_\nu}-\psi^{e_\nu}_\delta)\otimes\psi^{e_2}\otimes\cdots\otimes
        \psi^{e_d}
        +
        \psi^{e_1}_\delta\otimes(\psi^{e_2}-\psi^{e_2}_\delta)\otimes
        \psi^{e_3}_\delta\otimes\cdots\otimes\psi^{e_d}\\
        &+
        \psi^{e_1}_\delta\otimes\cdots\otimes\psi^{e_{d-1}}_\delta\otimes
        (\psi^{e_d}-\psi^{e_d}_\delta).
    \end{align*}
    Thus, \alertt{for estimating \eqref{eq:totalerror},}
    by a triangle inequality
    \begin{align}\label{eq:delta}
        \norm{\psi^e-\rlz(\tilde\Phi)}[L^p(\R^d)]\leq
        dK^{d-1}\delta=d(\max\{\norm{\varphi}[L^\infty(\R)],
    \norm{\psi}[L^\infty(\R)]\}+\delta)^{d-1}\delta.
    \end{align}
    From \eqref{eq:eta} and \eqref{eq:delta},
    we set $\eta:=S^{-d/p}\varepsilon/2$ and
    $$\alertt{
    \delta:=\varepsilon (\max\{1,\norm{\varphi}[L^\infty(\R)],
    \norm{\psi}[L^\infty(\R)]\})^{1-d}/(d2^d)<1}.$$
    \alertt{With this $\delta$, we can estimate
    \begin{align*}
    	dK^{d-1}\delta
    	&\leq
    	d(\max\{\norm{\varphi}[L^\infty(\R)],
    	\norm{\psi}[L^\infty(\R)]\})^{d-1}
    	(\delta^{1/(d-1)}+\delta^{d/(d-1)})^{d-1}\\
    	&\leq
    	d(\max\{\norm{\varphi}[L^\infty(\R)],
    	\norm{\psi}[L^\infty(\R)]\})^{d-1}
    	(2\delta^{1/(d-1)})^{d-1}\\
    	&=d(\max\{\norm{\varphi}[L^\infty(\R)],
    	\norm{\psi}[L^\infty(\R)]\})^{d-1}
    	2^{d-1}\delta
    	\leq\varepsilon/2,
    \end{align*}}
    \alertt{Thus, overall} we obtain \alertt{for \eqref{eq:totalerror}}
    \begin{align*}
        &\norm{\psi^e-\rlz(\Phi^\varepsilon_{\psi^e})}[L^p(\R^d)]\leq
        \varepsilon,
    \end{align*}
    with number of nonzero weights
    \begin{align*}
            &n_\varepsilon:=
            n_{\delta,\eta}=dC(1+\log(\delta^{-1}))+Cd\log(dK^d\eta^{-1})=\\
            &\alertt{
            \mc O
            \left(dL\log(L)[d\log(
            1+\max\{\norm{\varphi}[L^\infty], \norm{\psi}[L^\infty]\})
            +\log(\varepsilon^{-1})+d\log(S)+\log(d)]
            \right)},    
    \end{align*}        
    and,
	\alertt{according to \eqref{eq:depthphix}},    
    depth at most a multiple of
    \begin{align*}
              &1+\log(d)\log(dK^d\eta^{-1}) +
              \alertt{L\log(L)}\log(\delta^{-1})=
    \\
    &\alertt{
            \mc O
            \left(\log(d)L\log(L)[d\log(
            1+\max\{\norm{\varphi}[L^\infty], \norm{\psi}[L^\infty]\})
            +\log(\varepsilon^{-1})+d\log(S)
            +\log(d)]\right)}.        
    \end{align*}
    
    Finally, we define
    $\Phi_\lambda^\varepsilon\in\relu^{d,1}_{C(1+\log(\varepsilon^{-1}))}$ using
    \Cref{thm:calc} \ref{it:nn1} and \ref{it:nn5} such that for $\lambda=(e,j,k)$
    \begin{align*}
        \rlz(\Phi_\lambda^\varepsilon)=2^{dj/p}\rlz(\Phi^\varepsilon_{\psi^e})\circ
        D_k^{\alert{2^j}}.
    \end{align*}
    Note that the error bound
    for $\Phi_\lambda^\varepsilon$ remains unchanged due to the
    normalization constant $2^{dj/p}$.
\end{proof}

With this we turn to direct estimates for networks.
\begin{lemma}[Direct Estimates RePU/ReLU]\label{lemma:direct}
    \alert{Let $X=L^p(\Omega)$, $1<p\leq\infty$,
    $f\in B^\alpha_{q}(L^\tau(\Omega))$ with
    $\alpha,\tau,q>0$ and
    \begin{align*}
            \alpha/d\geq 1/\tau-1/p,\quad \alert{0<q\leq\tau\leq p},
    \end{align*}
    and assume $\Omega\subset\R^d$ is an $(\varepsilon,\delta)$-domain
    for $\tau\leq 1$ and a strong locally Lipschitz domain for $\tau>1$.
    For $p=\infty$, assume additionally $f$ is continuous
    with the convention $1/\infty=0$.
    Then,}
    \begin{enumerate}[label=(\roman*)]
        \item    for $r\geq 2$,
        \alertt{there exists a constant $C>0$ such that}
        \begin{align*}
            E(f,\repu^{r,d,1}_n)_{\alert{L^p(\Omega)}}\alert{\leq C} n^{-\alpha/d}
            \alert{\norm{f}[\alert{B^\alpha_{q}(L^\tau(\Omega))}]},
        \end{align*}
        \alertt{for all $n\in\N$}.
        \alert{The constant $C$ depends on $r$, $\alpha$, $\tau$, $p$, $d$ and
        $\Omega$.
        The networks realizing these approximations have depth at most
        logarithmic in $\alpha$ and $d$.}

        \item    For $r=1$,
        \alertt{there exists a constant $C>0$ such that}
        \begin{align*}
            E(f,\relu_n^{d,1})_{\alert{L^p(\Omega)}}\alert{\leq C} n^{-\bar\alpha/d}
            \alert{\norm{f}[\alert{B^\alpha_{q}(L^\tau(\Omega))}]},
        \end{align*}
        \alertt{for all $n\in\N$ and}
        for any $0<\bar\alpha<\alpha$.
        \alert{The constant $C$ depends $\alpha$, $\bar\alpha$, $\tau$, $p$, $d$ and
        $\Omega$.
        The networks realizing these approximations have depth
        at most logarithmic in $n$, \alertt{log-}linear in $\alpha$ and
        \alertt{log-linear} in $d$.}
    \end{enumerate}
\end{lemma}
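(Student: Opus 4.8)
The plan is to reduce the approximation of $f \in B^\alpha_q(L^\tau(\Omega))$ to the three ingredients already assembled: the extension operator of \Cref{thm:extension}, the best $N$-term wavelet approximation of \Cref{thm:Nterm}, and the network emulation of individual wavelets from \Cref{lemma:wavsd}, glued together by the NN calculus of \Cref{thm:calc}. First I would invoke \Cref{thm:extension} to replace $f$ on $\Omega$ by $\mc Ef \in B^\alpha_q(L^\tau(\R^d))$ with $\norm{\mc Ef}[B^\alpha_q(L^\tau(\R^d))] \lesssim \norm{f}[B^\alpha_q(L^\tau(\Omega))]$; the domain hypotheses ($(\varepsilon,\delta)$ for $\tau \le 1$, strong locally Lipschitz for $\tau > 1$) are exactly what that theorem requires. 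Fix a CDF-type wavelet system $\Psi$ as in \Cref{sec:wavs} whose scaling function satisfies \ref{W1}--\ref{W3} with $s, L > \alpha$ and also \ref{A1}--\ref{A2} (and \ref{P1} in the case $p = \infty$). By \Cref{thm:Nterm}, for each $N$ there is a set $\Lambda_N \subset \nabla$ with $\#\Lambda_N \le N$ and coefficients $c_{\lambda,p}$ such that $g_N := \sum_{\lambda \in \Lambda_N} c_{\lambda,p}\psi_{\lambda,p}$ satisfies $\norm{\mc Ef - g_N}[L^p(\R^d)] \lesssim N^{-\alpha/d}\snorm{\mc Ef}[B^\alpha_q(L^\tau(\R^d))]$ (using $\alpha/d \ge 1/\tau - 1/p$ and $0 < q \le \tau \le p$).

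Next I would emulate $g_N$ by a network. For the RePU case ($r \ge 2$), \Cref{lemma:wavsd}(i) gives, for each $\lambda$, a network $\Phi_\lambda \in \repu^{r,d,1}_C$ with $\rlz(\Phi_\lambda)$ approximating $\psi_{\lambda,p}$ to accuracy $\varepsilon_\lambda$ to be chosen, at cost $C$ independent of $\varepsilon_\lambda$ and depth $\mc O(\log(Ld))$; summing over $\lambda \in \Lambda_N$ with the scalar multiplications $c_{\lambda,p}$ via \Cref{thm:calc}\ref{it:nn1} and \ref{it:nn2} produces $\Phi \in \repu^{r,d,1}_{\tilde C N}$ (the additive depth-correction term in \ref{it:nn2} is harmless since all summands have comparable depth) whose realization approximates $g_N$. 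Choosing $\varepsilon_\lambda$ so that $\sum_\lambda |c_{\lambda,p}| \varepsilon_\lambda \lesssim N^{-\alpha/d}\snorm{\mc Ef}[B^\alpha_q]$ — which is possible at $\varepsilon_\lambda$-cost-free since RePU cost does not depend on $\varepsilon_\lambda$ — and restricting to $\Omega$ gives $E(f, \repu^{r,d,1}_n)_{L^p(\Omega)} \lesssim n^{-\alpha/d}\norm{f}[B^\alpha_q(L^\tau(\Omega))]$ after setting $n \sim N$. For the ReLU case ($r = 1$), \Cref{lemma:wavsd}\ref{it:relu} gives $\Phi_\lambda \in \relu^{d,1}_{C(1+\log(\varepsilon_\lambda^{-1}))}$; here the per-wavelet cost grows logarithmically in $\varepsilon_\lambda^{-1}$, so the total cost of the sum is $\mc O\!\big(N(1 + \log(\varepsilon^{-1}))\big)$ when each $\varepsilon_\lambda \sim \varepsilon$. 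Balancing the $N$-term error $N^{-\alpha/d}$ against the emulation error $\varepsilon$ by taking $\varepsilon \sim N^{-\alpha/d}$ yields total complexity $n \sim N \log N$, hence an error bound $n^{-\bar\alpha/d}$ for any $\bar\alpha < \alpha$ (the $\log$ factor is absorbed into the slightly degraded exponent). The depth claims follow by tracking the depths through \Cref{thm:calc}: logarithmic in $n$ (via $\log \varepsilon^{-1}$) and log-linear in $\alpha$ and $d$.

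The main technical point — not really an obstacle, but where care is needed — is controlling the coefficient magnitudes $|c_{\lambda,p}|$ that multiply the per-wavelet errors, since a naive bound $\sum_\lambda |c_{\lambda,p}| \varepsilon_\lambda$ could be large. This is handled by the characterization \Cref{thm:char}: the $\ell^p$-summability of $(c_{\lambda,p})_{\lambda \in \nabla_j}$ together with the Besov decay in $j$ bounds $\sum_{\lambda \in \Lambda_N} |c_{\lambda,p}|$ by a constant multiple of $\snorm{\mc Ef}[B^\alpha_q(L^\tau)]$ (using the embedding $B^\alpha_q(L^\tau) \hookrightarrow L^p$ on the critical line and $q \le \tau \le p$), or, alternatively, one distributes a per-level budget $\varepsilon_j$ decaying in $j$ and uses the same summability. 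A secondary subtlety is that for $0 < p < 1$ (relevant only through the intermediate $L^\tau$, $\tau \le 1$, inside \Cref{thm:Nterm}) the triangle inequality carries a constant depending on $p$; this only affects constants, as noted in the proof of \Cref{lemma:wavsd}. The interpolation consequences stated as the embedding into $A^{\theta/d}_{\bar q}$ then follow by applying the $K$-functional and the identities at the end of \Cref{sec:besov} to the direct estimates just proved, exactly as in the general machinery of \cite{gribonval:hal-02117139}.
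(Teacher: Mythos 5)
Your proposal follows essentially the same route as the paper: extend $f$ to $\R^d$ via \Cref{thm:extension}, take a best $N$-term wavelet approximation from \Cref{thm:Nterm}, emulate each wavelet with \Cref{lemma:wavsd}, assemble the weighted sum with \Cref{thm:calc}, and balance the emulation accuracy against the $N$-term error; the RePU case, where the per-wavelet cost is independent of the accuracy so the coefficient sum can be handled for free, is treated correctly.

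The one step that does not hold as written is the claim that $\sum_{\lambda\in\Lambda_N}|c_{\lambda,p}|$ is bounded by a constant multiple of $\snorm{\mc Ef}[B^\alpha_q(L^\tau(\R^d))]$ independently of $N$. After the renormalization $c_{\lambda,p}=2^{|\lambda|d(1/\tau-1/p)}c_{\lambda,\tau}$, the characterization in \Cref{thm:char} controls a weighted $\ell^\tau$ norm of the coefficients; for $\tau\leq 1$ the embedding $\ell^\tau\hookrightarrow\ell^1$ indeed yields your constant bound, but for $\tau>1$ H\"older over the $N$ active indices only gives $\sum_{\lambda\in\Lambda_N}|c_{\lambda,p}|\lesssim N^{1/\bar\tau}\snorm{f}[B^\alpha_\tau(L^\tau(\R^d))]$ with $1/\tau+1/\bar\tau=1$, and no $N$-independent bound is available (the coefficients are $\ell^\tau$-, not $\ell^1$-summable; the embedding into $L^p$ does not help here). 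With your choice $\varepsilon\sim N^{-\alpha/d}$ in the ReLU case the emulation error is then of order $N^{1/\bar\tau-\alpha/d}$, which need not even tend to zero. The paper's remedy is precisely what your ``alternative'' gestures at: take $\varepsilon:=N^{-\alpha/d-1/\bar\tau}/2$ when $\tau\geq1$, which absorbs the H\"older loss and only enlarges the constant in front of $\log N$ in the complexity $n\sim N(1+\log(\varepsilon^{-1}))$, so the final rate $n^{-\bar\alpha/d}$ for every $\bar\alpha<\alpha$ (and the exact rate $n^{-\alpha/d}$ for RePU) survives. Once this choice of $\varepsilon$ is corrected, your argument coincides with the paper's.
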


\begin{proof}
    Consider a wavelet system $\Psi$ that satisfies \alert{\ref{A1} -- \ref{A2},
    \ref{W1} -- \ref{W3} and $\ref{P1}$ for $p=\infty$.
    Such a wavelet system can be constructed, e.g., as in \cite{cdf}
    and we can use $\Psi$ for $N$-term approximations as
    in \Cref{thm:Nterm}.
    We detail the proof for ReLU networks in part (ii), part (i) follows
    analogously with fewer technicalities.}
    
    Let $\Omega=\R^d$
    and $f_N:=\sum_{\lambda\in\Lambda_N}c_{\lambda,p}(f)
    \psi_\lambda$ with $\#\Lambda_N\leq N$ be
    \alert{a best $N$-term wavelet
    approximation to $f$}.
    For $\varepsilon>0$ to be specified later,
    let $\Phi_\lambda^\varepsilon\in\relu^{d,1}_{C(1+\log(\varepsilon^{-1}))}$
    be the ReLU $\varepsilon$-approximation of $\psi_\lambda$
    from \Cref{lemma:wavsd}.
    \alert{Let
    $\Phi^\varepsilon_N\in\relu^{d,1}_{CN(1+\log(\varepsilon^{-1}))}$
    be a sum network implemented as in \Cref{thm:calc} (ii) such
    that}
    \begin{align*}
        \alert{
        \rlz(\Phi_N^\varepsilon)=\sum_{\lambda\in\Lambda_N}c_{\lambda,p}(f)
        \rlz(\Phi_\lambda^\varepsilon).
        }
    \end{align*}
    \alert{From \Cref{thm:calc} (ii), the depth of $\Phi^\varepsilon_N$
    is the same as that of the $\Phi_\lambda^\varepsilon$'s.}
    Then,
    \begin{align*}
        \norm{f_N-\rlz(\Phi^{\alert{\varepsilon}}_N)}[\alert{L^p(\Omega)}]\leq
        \varepsilon\sum_{\lambda\in\Lambda_N}|c_{\lambda,p}(f)|.
    \end{align*}
    The sum of the coefficients is bounded by a Besov semi-norm
    of $f$ as we show next.
    
    \alertt{We use the renormalization relationship from
    \eqref{eq:renorm}.}
    \alert{First, we renormalize the coefficients in $L^\tau$, multiply by one and split the
    sum in two
    \begin{align*}
        \sum_{\lambda\in\Lambda_N}|c_{\lambda,p}(f)|
        &=\sum_{\lambda\in\Lambda_N}|c_{\lambda,\tau}(f)|
        2^{\alpha|\lambda|}2^{-|\lambda|(\alpha-d[1/\tau-1/p])}\\
        &=\sum_{\lambda\in\Lambda_N,\;|\lambda|\geq 0}|c_{\lambda,\tau}(f)|
        2^{\alpha|\lambda|}2^{-|\lambda|(\alpha-d[1/\tau-1/p])}\\
        &+
        \sum_{\lambda\in\Lambda_N,\;|\lambda|<0}|c_{\lambda,\tau}(f)|
        2^{\alpha|\lambda|}2^{-|\lambda|(\alpha-d[1/\tau-1/p])}.
    \end{align*}
    
    We abbreviate $\alertt{\alpha^*}:=d[1/\tau-1/p]$, where
    by assumption $0<\alertt{\alpha^*}\leq\alpha$.}    
    Next, if $\tau<1$,
	\alertt{we can estimate the first summand as
	\begin{align*}
		\sum_{\lambda\in\Lambda_N,\;|\lambda|\geq 0}|c_{\lambda,\tau}(f)|
        2^{\alpha|\lambda|}2^{-|\lambda|(\alpha-d[1/\tau-1/p])}
        &=\sum_{\lambda\in\Lambda_N,\;|\lambda|\geq 0}|c_{\lambda,\tau}(f)|
        2^{\alpha^*|\lambda|}\\
        &\leq
        \left(\sum_{\lambda\in\Lambda_N}|c_{\lambda,\tau}(f)|^\tau
        2^{\alpha^*|\lambda|\tau}\right)^{1/\tau},
	\end{align*}
	and the second summand as
	\begin{align*}
		\sum_{\lambda\in\Lambda_N,\;|\lambda|<0}|c_{\lambda,\tau}(f)|
        2^{\alpha|\lambda|}2^{-|\lambda|(\alpha-d[1/\tau-1/p])}
        &\leq
        \sum_{\lambda\in\Lambda_N,\;|\lambda|<0}|c_{\lambda,\tau}(f)|
        2^{-|\lambda|\alpha}\\
        &\leq
        \left(\sum_{\lambda\in\Lambda_N}|c_{\lambda,\tau}(f)|^\tau
        2^{|\lambda|\alpha\tau}\right)^{1/\tau},
	\end{align*}}	    
    \alertt{and thus overall}
    \begin{align*}
        \sum_{\lambda\in\Lambda_N}|c_{\lambda,p}(f)|&\leq
        \left(\sum_{\lambda\in\Lambda_N}|c_{\lambda,\tau}(f)|^\tau
        2^{\alertt{\alpha^*|\lambda|}\tau}\right)^{1/\tau}+
        \left(\sum_{\lambda\in\Lambda_N}|c_{\lambda,\tau}(f)|^\tau
        2^{\alpha\alertt{|\lambda|}\tau}\right)^{1/\tau}\\
        &\leq2\snorm{f}[B_\tau^\alpha(L^\tau(\R^d))],
    \end{align*}
    \alertt{where the last inequality is due to
    the characterization of the Besov semi-norm
    from \Cref{thm:char},
    the fact that
    $\alpha^*\leq\alpha$ and
    the definition of the Besov semi-norm
    (see \Cref{sec:besov}).}

    If $\tau\geq 1$, we apply Hölder with $1\leq\bar\tau\leq\infty$ such that
    $1/\tau+1/\bar\tau=1$,
    \alertt{the definition and characterization
    of the Besov semi-norm
    together with $\alpha^*\leq\alpha$,
    and obtain}
    \begin{align*}
        \sum_{\lambda\in\Lambda_N}|c_{\lambda,p}(f)|\leq
        2N^{1/\bar\tau}\snorm{f}[B_\tau^\alpha(L^\tau(\R^d))].
    \end{align*}
    
    Thus, we set
    \alert{either $\varepsilon:=N^{-\alpha/d}/2$
    for $\tau<1$ or $\varepsilon:=N^{-\alpha/d-1/\bar\tau}/2$
    for $\tau\geq 1$}
    and obtain together with
    \Cref{thm:Nterm}
    \begin{align*}
        \norm{f-\rlz(\Phi^{\alert{\varepsilon}}_N)}[\alert{L^p(\R^d)}]\lesssim
        N^{-\alpha/d}\snorm{f}[B^{\alpha}_{\alert{\tau}}(L^{\tau}(\R^d))].
    \end{align*}
    \alert{Due to the definition of the Besov semi-norm,
    it is \alertt{straightforward} to extend this to Besov semi-norms
    $\snorm{f}[B^{\alpha}_{\alert{q}}(L^{\tau}(\R^d))]$
    for any $0<q\leq\tau$ (see also the discussion in
    \Cref{sec:besov}).}
    The complexity of this network can be bounded by
    \alertt{for the case $\tau\geq 1$ as}
    \begin{align*}
        n:=\alert{CN(1+\log(\varepsilon^{-1}))}
        =\alertt{CN(1+[\alpha/d+\alert{1/\bar\tau}]\log(N))}\lesssim N^{1+\delta},
    \end{align*}
    for any $\delta>0$, or, equivalently,
    \begin{align*}
        \norm{f-\rlz(\Phi_N)}[\alert{L^p(\R^d)}]\lesssim
        n^{-\bar\alpha/d}\snorm{f}[B^{\alpha}_{q}(L^{\tau}(\R^d))],
    \end{align*}
    for any $0<\bar\alpha<\alpha$.
	\alertt{The bound for the case $\tau<1$ is similar, omitting
	$\bar\tau$.}    
    This shows the statement for
    $\Omega=\R^d$.
    
    For $\Omega\subset\R^d$ \alert{an $(\varepsilon,\delta)$-domain
    for $0<\tau\leq 1$ or a locally Lipschitz domain
    for $1<\tau\leq\infty$},
    we use the extension operator from \Cref{thm:extension}
    to obtain for any $f\in B^{\alpha}_{q}(L^{\tau}(\Omega))$
    \begin{align*}
        E(f, \relu_n^{d,1})_{\alert{L^p(\Omega)}}&\leq
        E(\mc Ef, \relu_n^{d,1})_{\alert{L^p(\R^d)}}\lesssim
        n^{-\bar\alpha/d}\snorm{\mc Ef}[B^{\alpha}_{q}(L^{\tau}(\R^d))]
        \\
        &\lesssim n^{-\bar\alpha/d}
        \norm{f}[B^{\alpha}_{q}(L^{\tau}(\Omega))].
    \end{align*}
\end{proof}

Finally, the direct estimates above immediately imply the main result of this work. 

\begin{theorem}[Direct Embeddings]\label{thm:embedd}
    \alert{Let $X=L^p(\Omega)$, $1<p\leq\infty$,
    $\alpha,\tau,q>0$ and
    \begin{align*}
            \alpha/d\geq 1/\tau-1/p,\quad \alert{0<q\leq\tau\leq p},
    \end{align*}
    and assume $\Omega\subset\R^d$ is an $(\varepsilon,\delta)$-domain
    for $\tau\leq 1$ and a strong locally Lipschitz domain for $\tau>1$.
    Then,}
    \begin{enumerate}[label=(\roman*)]
        \item    for $r\geq 2$
        \alertt{and $0<\theta<\alpha$, $0<\bar q\leq\infty$,}
        the following embeddings hold
        \begin{align*}
            B^\alpha_q(L^\tau(\Omega))&\hookrightarrow
            A^{\alpha/d}_\infty(L^p(\Omega),\repu^{r,d,1}),\\
            (L^p(\Omega), B^\alpha_q(L^\tau(\Omega)))_{\theta/\alpha,\bar q}
            &\hookrightarrow
            A^{\theta/d}_{\bar q}(L^p(\Omega),\repu^{r,d,1}).
        \end{align*}
        \item    For $r=1$
        \alertt{and $0<\theta<\bar\alpha$, $0<\bar q\leq\infty$,
        $0<\bar\alpha<\alpha$},
        the following embeddings hold
        \begin{align*}
            B^\alpha_q(L^\tau(\Omega))&\hookrightarrow
            A^{\bar\alpha/d}_\infty(L^p(\Omega),\relu^{d,1}),\\
            (L^p(\Omega), B^\alpha_q(L^\tau(\Omega)))_{\theta/\bar\alpha,\bar q}
            &\hookrightarrow
            A^{\theta/d}_{\bar q}(L^p(\Omega),\relu^{d,1}).
        \end{align*}
    \end{enumerate}
\end{theorem}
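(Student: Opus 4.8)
The plan is to obtain both assertions as immediate corollaries of the Jackson-type estimates in \Cref{lemma:direct}, combined with the standard interpolation theory of approximation classes; nothing beyond those estimates and the structural properties of the network families (already established in \cite{gribonval:hal-02117139}) is needed.

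First I would dispatch the unconditional embedding. By \Cref{def:asq}, $\|f\|_{A^{\alpha/d}_\infty(L^p(\Omega),\repu^{r,d,1})}=\sup_{n\ge1}n^{\alpha/d}E(f,\repu^{r,d,1}_n)_{L^p(\Omega)}$, and \Cref{lemma:direct}(i) asserts precisely that $n^{\alpha/d}E(f,\repu^{r,d,1}_n)_{L^p(\Omega)}\le C\|f\|_{B^\alpha_q(L^\tau(\Omega))}$ for every $n\in\N$; taking the supremum gives $B^\alpha_q(L^\tau(\Omega))\hookrightarrow A^{\alpha/d}_\infty(L^p(\Omega),\repu^{r,d,1})$. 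The ReLU statement is obtained verbatim after replacing $\alpha$ by any $\bar\alpha<\alpha$ and invoking \Cref{lemma:direct}(ii).

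For the interpolated embedding I would use the reiteration identity for approximation spaces: under the usual axioms on $\Sigma=(\Sigma_n)_{n\ge0}$ --- nestedness, invariance of $\Sigma_n$ under scalar multiplication, the (approximate) subadditivity $\Sigma_n+\Sigma_m\subset\Sigma_{c(n+m)}$, and the convention $\Sigma_0=\{0\}$ making $A^0_\infty(X,\Sigma)=X$ --- one has $(X,A^s_\infty(X,\Sigma))_{\eta,\bar q}=A^{\eta s}_{\bar q}(X,\Sigma)$ for all $0<\eta<1$, $0<\bar q\le\infty$. These axioms hold for $\repu^{r,d,1}$ and $\relu^{d,1}$ by the NN calculus of \Cref{thm:calc} and were worked out in \cite{gribonval:hal-02117139}. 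Combining this with the embedding of the previous paragraph and monotonicity of the $K$-functional under continuous embeddings, for $r\ge2$ and $0<\theta<\alpha$ (so that $\theta/\alpha\in(0,1)$) I get
\begin{align*}
    (L^p(\Omega),B^\alpha_q(L^\tau(\Omega)))_{\theta/\alpha,\bar q}
    \hookrightarrow
    (L^p(\Omega),A^{\alpha/d}_\infty(L^p(\Omega),\repu^{r,d,1}))_{\theta/\alpha,\bar q}
    = A^{\theta/d}_{\bar q}(L^p(\Omega),\repu^{r,d,1}),
\end{align*}
the last equality being the reiteration identity with $s=\alpha/d$ and $\eta=\theta/\alpha$. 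For $r=1$ the same chain works with $\bar\alpha$ in place of $\alpha$ and $0<\theta<\bar\alpha$, yielding the embedding into $A^{\theta/d}_{\bar q}(L^p(\Omega),\relu^{d,1})$.

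The only delicate point is not a new estimate but a verification: one must be sure that the reiteration identity genuinely applies to these network classes despite the fact that $W(\cdot)$ is only \emph{approximately} subadditive under network summation (\Cref{thm:calc}(ii) incurs an overhead proportional to a depth discrepancy). Since all networks entering the construction here have comparable depth, this overhead is harmless, and this is exactly the setting already treated in \cite{gribonval:hal-02117139}; hence it suffices to cite that framework rather than re-derive the interpolation identity.
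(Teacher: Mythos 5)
Your proposal is correct and follows essentially the same route as the paper, which derives \Cref{thm:embedd} directly from the Jackson estimates of \Cref{lemma:direct} together with the interpolation/reiteration identity for approximation classes established in the framework of \cite{gribonval:hal-02117139}. Your explicit verification of the reiteration step (and the remark on approximate subadditivity of $W(\cdot)$) simply spells out what the paper leaves implicit in the phrase ``the direct estimates above immediately imply the main result.''
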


\begin{remark}[\alert{Hardy Spaces $H_p(\R^d)$ and $0<p\leq 1$}]\label{rem:hardy}
    \alert{\Cref{thm:char} provides
    best $N$-term approximation rates for Besov spaces in the Hardy $H_p$-norm
    for $0<p\leq 1$.
    ReLU networks can reproduce \alertt{piecewise} linear one-dimensional wavelet systems
    \emph{exactly} and hence all
    the embeddings from \Cref{thm:embedd} hold for
    $r=1$ and
    the homogeneous Besov space $B_q^\alpha(H_\tau(\R))$
    instead of $B_q^\alpha(L^\tau(\Omega))$
    for $0<p\leq 1$ and $1/\tau-1/p\leq \alpha<2$.}
    
    \alert{For more general results
    one would require estimates as in \Cref{lemma:wavsd}
    in the $H_p$-norm.
    However, compactly supported, continuous, \alertt{piecewise} polynomials
    are not necessarily in $H_p$.
    In order to ensure $\rlz(\Phi_\lambda^\varepsilon)$ is in $H_p$,
    we require additionally that the approximands
    $\rlz(\Phi_\lambda^\varepsilon)$ have $d(1/p-1)$
    vanishing moments for any $\varepsilon$ and $\lambda$.}
    
    \alert{Furthermore, to extend results from $\R^d$ to general
    domains $\Omega$,
    one requires constructing extension operators bounded in the
    homogeneous Besov norm.
    A detailed investigation of the case of Hardy spaces
    is thus beyond the scope of this work.}
\end{remark}

\bibliographystyle{siamplain}
\bibliography{literature}

\end{document}